\documentclass{siamltex}
\usepackage{amsmath,amsfonts}
\usepackage{color}
\usepackage{graphicx}
\usepackage{epsfig}

\newcommand{\A}{\mathcal{A}}

\newcommand{\G}{{\mathcal G}}
\newcommand{\iprod}[1]{\langle#1\rangle}

\newcommand{\W}{{\mathcal W}}
\newcommand{\T}{{\mathcal T}}

\newcommand{\I}{{\mathcal I}}

\newcounter{bean}
\newenvironment{romenum}{\begin{list}{{(\roman{bean})}}
{\usecounter{bean}}}{\end{list}}

\renewcommand{\arraystretch}{.75}

\title{A discontinuous Petrov-Galerkin method for time-fractional diffusion equations \thanks{Support of the King Fahd University of Petroleum and Minerals
(KFUPM) through
 the project FT131001 is gratefully acknowledged.}}
\author{K. Mustapha$^1$ \and B. Abdallah$^2$ \and K.M. Furati$^3$ \thanks{
Department of Mathematics and Statistics, KFUPM, Dhahran 31261, Saudi Arabia \texttt{Email$^1$:kassem@kfupm.edu.sa}}}

\date{\today}

\begin{document}

\maketitle

\begin{abstract}
 We propose and analyze a  time-stepping
discontinuous Petrov-Galerkin  method combined with the continuous  conforming finite element method in space for the numerical solution of
time-fractional subdiffusion problems. We prove the existence, uniqueness and stability of approximate solutions, and derive error estimates. To
achieve high order convergence rates from the time discretizations, the time mesh is graded appropriately near~$t=0$  to compensate the singular
(temporal) behaviour of the exact solution near $t=0$ caused by the weakly singular kernel, but the spatial mesh is quasiuniform. In the
$L_\infty((0,T);L_2(\Omega))$-norm ($(0,T)$ is the time domain and $\Omega$ is the spatial domain), for sufficiently graded time meshes,  a
global convergence of order $k^{m+\alpha/2}+h^{r+1}$ is shown, where $0<\alpha<1$ is the fractional exponent, $k$ is the maximum time step, $h$
is the maximum diameter of the spatial finite elements, and $m$ and $r$ are the degrees of approximate solutions in time and spatial variables,
respectively. Numerical experiments indicate that our theoretical   error bound is pessimistic. We observe that the error is of
  order ~$k^{m+1}+h^{r+1}$, that is, optimal in both variables.
\end{abstract}

\begin{keywords}
Fractional diffusion,  discontinuous Petrov-Galerkin method, variable time steps, stability and  error analysis
\end{keywords}

\pagestyle{myheadings} \thispagestyle{plain} \markboth{K. MUSTAPHA, B. ABDALLAH AND K. M. FURATI}{DPG METHODS FOR FRACTIONAL DIFFUSION
EQUATIONS}
\setcounter{equation}{0} \setcounter{theorem}{0}\section{Introduction}
In this paper, we propose and analyze the  time-stepping discontinuous Petrov-Galerkin (DPG)  method combined with the standard continuous
finite element  (DPG-FE) method in space for solving numerically the time-fractional diffusion model:
\begin{equation}
\label{eq:fractional equation}
 ^c{\rm D}^{1-\alpha} u(x,t)-\Delta u(x,t) = f(x,t) \quad\mbox{ for } (x,t)\in \Omega\times (0,T]~~~{\rm with}~~u(x,0)=u_0(x),
\end{equation}
subject to homogeneous Dirichlet spatial boundary conditions. Here,   $\Omega \subset \mathbb{R}^d$ (with $d=1,2,3$) is a convex polyhedral domain with boundary $\partial \Omega$, $f$ and $u_0$ are given
functions assumed to be sufficiently regular such that the solution $u$ of \eqref{eq:fractional equation} is in the space $W^{1,1}((0,T);H^2(\Omega))$; see  the regularity analysis in \cite{McLean2010} (further regularity assumptions will be imposed later),  and $T>0$ is a fixed value. Here, $^c{\rm D}^{1-\alpha}$ denotes the time fractional Caputo derivative of order $\alpha$  of the
function $u$ defined by
\begin{equation}  ^c{\rm D}^{1-\alpha} u(x,t):= (\I^{\alpha} u') (x,t)\quad {\rm with}~~
0<\alpha<1,
\end{equation}
where $u':=\frac{\partial}{\partial t} u$ and $\I^\alpha$ is the Riemann--Liouville time fractional integral operator;
\begin{equation}  \I^\alpha v(t):=  \int_0^t\omega_{\alpha}(t-s)v(s)\,ds\quad\text{with}
\quad \omega_{\alpha}(t):=\frac{t^{\alpha-1}}{\Gamma(\alpha)}\,.
\end{equation}

Problems of the form \eqref{eq:fractional equation} arise in a variety of physical, biological and chemical applications
\cite{Mathai,podlubny,SmithMorrison,Tarasov}. Problem \eqref{eq:fractional equation} describes slow or anomalous sub-diffusion and occurs, for
example, in models of fractured or porous media, where the particle flux depends on the entire history of the density gradient$\nabla u$.

\subsection{Motivation and outline of the paper} The nonlocal nature of the fractional derivative operator $^c{\rm D}^{1-\alpha}$  means that on each time subinterval, one must efﬁciently evaluate a sum of integrals over all previous time subintervals.
Thus, reducing the number of time steps as much as possible and  maintaining high accuracy of the discrete solutions are important. So, the most obvious thing is to propose efficient high order methods for the model problem \eqref{eq:fractional equation}. In this work we investigate (for the first time to the best of our knowledge) a high order accurate (unconditionally stable) time-stepping numerical method for problem \eqref{eq:fractional equation}. However, due to the typical singular behaviour of ~$u$ near ~$t\to0$ ~\cite{McLean2010,McLeanMustapha2007}, high order methods can fail to achieve fast convergence. To this end, we propose to deal with the accuracy issue (in time) by developing   a high order DPG method that allows for the singular behaviour of~$u$, by employing non-uniform time steps.  An important feature of the DPG method is that it allows for locally varying time-steps and
   approximation orders which are beneficial to handle problems with low regularity.  The DPG method was introduced initially for solving first order ODEs in \cite{Hulme}. Later on,  DPG methods were investigated by several authors for solving various problems. For instance, for  advection-diffusion and elliptic problems, see \cite{BottassoMichelettiSacco2002, BottassoMichelettiSacco2005}, for transport equations see \cite{DemkowiczGopalakrishnan2010}, and refer to
\cite{LinLinRaoZhang, Mustapha2008} for Volterra integro-differential equations with smooth memory. Here, we extend the original DPG method in
\cite{Hulme} to discretize in time the fractional diffusion problem \eqref{eq:fractional equation}. For the sake of completeness, we combine the time-stepping DPG with the continuous finite elements (FEs) in space, which will then define a fully discrete computable scheme.   Existence, uniqueness and stability of our numerical scheme will be provided. For the error analysis, we show convergence rates of order $O(k^{m+\alpha/2} + h^{r+1})$ in the $L_\infty((0,T);L_2(\Omega))$-norm, where $k$ is the maximum time step, $h$ is the maximum diameter of the spatial finite elements, and $m$ and $r$ are the degrees of approximate solutions in time and spatial variables, respectively. The main difficulty in our stability and error analysis is due to the
trouble from the time  discretization. In this direction, we make  full use of several important properties of the operator  $^c{\rm D}^{1-\alpha}$; see Lemma
\ref{lemma:positivity-equal-0}.  In contrast, for $m=1$,  the considered time stepping DPG scheme amounts to a generalized post-processed Crank-Nicolson scheme. To validate the achieved theoretical results,  a series of numerical results will be given at the end of the
paper. Since  in the present work our emphasis is on convergence properties rather than algorithmic implementation, in
our numerical experiments we do not use the fast algorithm.  A direct implementation of the considered  method requires $\mathcal{O}(mN^2 {\bf M})$  operations and requires $\mathcal{O}(m N {\bf M})$ storage, owing  to the presence of the  memory term, where $N$ is the number of time mesh elements and ${\bf M}$ is the spatial degrees of freedom. Proposing a fast algorithm for evaluating the discrete solution  is beyond the scope of the present paper. This will be a topic for future research.

The outline of the paper is as follows.  Section~\ref{sec:numerical-method} introduces a fully discrete DPG-FE scheme. In
Section~\ref{sec:existence-uniqueness}, using appropriately  the positivity, coercivity, and  continuity properties of the operator $^c{\rm
D}^{1-\alpha}$, we prove the existence, uniqueness, and stability of the discrete solution. The error and convergence analysis are given in
Section~\ref{sec:error}.   We derive error estimates, which are completely explicit in the local step sizes, the local polynomial degrees, and
the local regularity of the analytical solution. Using suitable refined time-steps (towards $t=0$), in the $L_\infty((0,T);L_2(\Omega))$-norm,
convergence of order $O(k^{m+\alpha/2}+h^{r+1})$ will be achieved. Section~\ref{sec:implementation and numerics} is devoted to present a series
of numerical tests which indicate the validity of our theoretical convergence properties  and also illustrate that our error bounds are
pessimistic. For a strongly graded time mesh, we observe that the error from the time discretization is $O(k^{m+1})$ (optimal), which is better
than our theoretical estimate by a factor~$k^{1-\alpha/2}$.
\subsection{Literature review}
Several authors have proposed a variety of low-order numerical methods for the model problem \eqref{eq:fractional equation}.  For one
dimensional cases, \cite{ZhaoSun2011} constructed a box-type scheme based on combining order reduction approach and L$_1$ discretization was
considered.  The authors proved global convergence rates of order $O(k^{2-\alpha} + h^2)$, assuming that the solution $u$ of
\eqref{eq:fractional equation} is sufficiently regular. An implicit finite difference scheme in time and Legendre spectral methods in space were
studied in \cite{LinXu2007}. Stability and convergence of order $O(k^{1+\alpha}+r^{-\ell})$ of the method were established, where $r$ is the
degree of the approximate solution in space and $\ell$ is related to the order of regularity of the solution $u$ of \eqref{eq:fractional
equation}, which is typically low. An extension of this work was considered in \cite{LiXu2009} where  a time-space spectral method has been
proposed and analyzed. For an explicit difference (first order in time-second order in space) method, we refer the reader to
\cite{Quintana-MurilloYuste2011}. The stability analysis was carried out by means of a kind of fractional von Neumann method. The authors
provided a partial convergence analysis (truncation error of order $O(k + h^2)$) assuming that $u$ is sufficiently regular. An implicit
Crank--Nicolson had been considered in \cite{SweilamKhaderMahdy2012} and the stability of the proposed scheme was shown. Some numerical
experiments were presented to illustrate the convergence of the approximate solutions. Very recently, two finite difference/element approaches
were developed in \cite{ZengoLiLiuTurner2013}, in which the time direction was approximated by the fractional linear multistep method and the
space direction was approximated by the standard FEM of degree $r$. Assuming the  solution  of \eqref{eq:fractional equation} is sufficiently
smooth, convergence rates of order $O(k^{1+\alpha}+h^{r+1})$  were proved.

For two (or three)  dimensional cases,  a standard central difference approximation was used for the spatial discretization, and, for the time stepping, two
alternating direction implicit (ADI) schemes based on the $L1$ approximation and backward Euler method  were investigated in
\cite{ZhangSun2011}. Assuming that $u$ is smooth, the authors proved  convergence of order $O(k^{\min\{2\alpha,2- \alpha\}} + h^2)$ and
$O(k^{\min\{1+\alpha,2- \alpha\}} + h^2)$, respectively. A compact finite difference method with operator-splitting techniques was considered in
\cite{Cui2012}.
 The Caputo derivative was evaluated by the $L1$ approximation, and the second order spatial derivatives
  were approximated by the fourth order compact (implicit) finite differences.
The unconditional stability was analyzed, and by using the energy method,  errors of order $O(k^{\min\{1+\alpha,2- \alpha\}} +h^4)$ were
achieved assuming that $u$ is smooth. In \cite{JinLazarovZhou2012}, for $f=0$ in problem \eqref{eq:fractional equation} (that is, homogeneous
case), the authors studied two spatial semidiscrete  piecewise linear approximation schemes: Galerkin FEM and lumped mass Galerkin FEM. Optimal
error estimates   were established including the case of non-smooth initial data.
In \cite{JinLazarovZhou2014}, the same authors  developed two simple fully discrete schemes based on  Galerkin FEMs in space and implicit backward differences for the  time discretizations. Optimal error estimates with respect to the regularity of the initial data were established.

In contrast, for the numerical solutions of the alternative representation of  the fractional subdiffusion problem \eqref{eq:fractional
equation}:
\begin{equation}
\label{eq: reimann} u'(x,t) - {^{R}{\rm D}}^{1-\alpha} \Delta u(x,t)  = f(x,t) \quad\mbox{ for } (x,t)\in \Omega\times (0,T],
\end{equation}
where  $^{R}{\rm D}^{1-\alpha} u(x,t):=\frac{\partial }{\partial t}( \I^\alpha u)(x,t)$ (Riemann--Liouville fractional time derivative of $u$),
we refer the reader to
\cite{ChenLiuAnhTurner2011,CuestaLubichPalencia2006,Cui2009,LanglandsHenry2005,LiuYangBurrage2009,McLeanMustapha2009,Mustapha2011,MustaphaAlMutawa,MustaphaMcLean2012,MustaphaMcLean2013,YusteAcedo2005,ZhuangLiuAnhTurner2008}.
Practically, the two representations are different ways of writing the same equation as they are equivalent under reasonable assumptions on the
initial data, see \cite{YusteQuintana2009}. However, the numerical methods obtained for each representation are formally different.
\setcounter{equation}{0} \setcounter{theorem}{0}
\section{Numerical scheme}\label{sec:numerical-method}
To describe our fully discrete DPG-FE method, we introduce a (possibly nonuniform) time partition of the interval $[0,T]$ given by the points:
$0=t_{0}<t_{1}<\cdots<t_{N}=T\,.$ We set $I_n=(t_{n-1},t_n)$ and $k_n=t_n-t_{n-1}$ for $1\le n\le N$. Let $S_h\subseteq H_0^1(\Omega)$
 :=$\{v\in H^1(\Omega):~~~v=0~{\rm on}~\partial \Omega\}$ denotes the space of continuous, piecewise polynomials
 of degree $\le r$ ($r\ge 1$) with respect to a quasi-uniform partition of~$\Omega$ into
conforming triangular  finite elements, with maximum diameter~$h$. Hence, the Ritz projection operator $\,R_h:H^1_0(\Omega) \to S_h$ defined by
 \begin{equation}\label{eq: Ritz projection} \iprod{\nabla (R_h v-v),\nabla \chi}=0\quad {\rm
for~ all}~~~ \chi\in S_h,\end{equation}  has the approximation property: for $v \in H^{s+1}(\Omega) \cap  H^1_0(\Omega)$,
\begin{equation}\label{eq:estimate ritz projector} \|  R_hv-v\|\leq C\,
h^{\min\{s,r\}+1}\|v\|^2_{s+1}\quad {\rm for}~~r\geq 1 ~{\rm and}~~s\geq 0,
\end{equation}
where, by  $\iprod{\cdot,\cdot}$ and $\|\cdot\|$,  we denote the $L_2$-inner product and the associated norm over the spatial domain $\Omega$.
By $\| \cdot\|_r$ we denote the standard $H^r(\Omega)$-norm for $ r\ge 1$.

Next,  we introduce the following spaces: for a fixed $m\ge 1,$
\begin{equation}
\begin{split}
 \W(S_h)&=\{{v\in \mathcal{C}([0,T]; S_h):~~v|_{I_{n}}\in P_{m}(S_h)~{\rm for}~1\le n\le N}\}\label{eq: trial space}\\
 \T({S_h})&=\{v \in L_2((0,T); S_h):~~v|_{I_{n}}\in
P_{m-1}(S_h)~{\rm for}~1\le n\le N\} \end{split}\end{equation}  where $P_{m}(S_h)$ denotes the space
of polynomials of degree $\le m$ in the time variable $t$,  with coefficients in~$S_h$. So, for a given function $v\in \W(S_h)$ then  $v'\in \T(S_h)$. Here  $v'$ is a piecewise polynomial obtained by differentiating $v$ with respect to $t$ on each subinterval $I_n$ for $1\le n\le N$.

Now, we are ready to define our  DPG-FE numerical scheme for problem \eqref{eq:fractional equation} as follows:
 Find   $U_h \in \W(S_h)$ such that,  $U_h(0)=R_h u_0$, and
 \begin{equation}
 \label{eq: DPGFM}
\int_0^T\left(\iprod{^c{\rm D}^{1-\alpha} U_h ,X}+\iprod{ \nabla U_h,  \nabla X}\right)dt=\int_0^T \iprod{f,X}\,dt\quad \forall~\text{ $X \in
\T(S_h)$ }.
 \end{equation}
 In the next section,  we will  show the well-posedness of our scheme.
 \setcounter{equation}{0}
 \setcounter{theorem}{0}
\section{Well-posedness  of the DPG-FE  scheme}\label{sec:existence-uniqueness}
In this section, we show the well-posedness of the discrete DPG-FE solutions.  To be able to do this, we need to carefully use several  crucial
properties of the operator ${^c}{\rm D}^{1-\alpha}$. These properties will be stated in the next lemma, we refer the reader to \cite[Lemma
3.1]{MustaphaSchoetzau2012} for the proof.
\begin{lemma}
\label{lemma:positivity-equal-0} For $1\le j\le n,$ let $v|_{I_j} ,\,w|_{I_j}   \in   H^1(I_j, L_2(\Omega))\cap \mathcal{C}(\overline I_j, L_2(\Omega))$. {There holds:}
\begin{romenum}
\item If $\max_{j=0}^n \,\|v(t_j)\|  +\int_0^{t_n}\iprod{ v', ^c{\rm D}^{1-\alpha} v}\,dt =0,$
then $v\equiv 0$ on $[0,t_n]$.
\item The coercivity property:
$$
\int_0^{t_n} \iprod{v',{^c}{\rm D}^{1-\alpha} v}\,dt \ge c_\alpha  \int_0^{t_n} \|^c{\rm D}^{1-\frac{\alpha}{2}} v\|^2\,dt~{\rm with}
~c_\alpha=\cos(\alpha \pi/2).
$$
\item The continuity property: for any  $\epsilon >0,$
$$
\Big|\int_0^{t_n}\iprod{v',{^c}{\rm D}^{1-\alpha} w}\,dt\Big| \le \int_0^{t_n} \left(\frac{\epsilon}{2\,c_\alpha^2} \iprod{v',{^c}{\rm
D}^{1-\alpha} v}+\frac{1}{2\,\epsilon }\iprod{w',{^c}{\rm D}^{1-\alpha} w}\right)dt$$
where assuming that $\int_0^{t_n} \iprod{v',{^c}{\rm D}^{1-\alpha} v}\,dt$ and $\int_0^{t_n} \iprod{w',{^c}{\rm D}^{1-\alpha} w}dt$ to be absolutely bounded should be sufficient for this property.
\end{romenum}
  \end{lemma}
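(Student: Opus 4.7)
The plan is to reduce the problem to a scalar-in-time Fourier computation. Since the spatial inner product $\iprod{\cdot,\cdot}$ is an inert bilinear pairing, it suffices to establish (i)--(iii) for scalar-valued $v,w$ in time (pointwise in $x$) and then integrate over $\Omega$. The central identity is $^c{\rm D}^{1-\alpha} v = \I^\alpha v'$, which recasts everything in terms of the quadratic form $\int_0^{t_n} v'(\I^\alpha v')\,dt$ and the mixed form $\int_0^{t_n} v'(\I^\alpha w')\,dt$.

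To evaluate these, I would first extend $v'$ and $w'$ by zero to the whole real line. The causal kernel $\omega_\alpha$ has Fourier transform $\widehat{\omega_\alpha}(\xi) = (i\xi)^{-\alpha} = |\xi|^{-\alpha} e^{-i\alpha\pi \operatorname{sgn}(\xi)/2}$ on the principal branch. Parseval together with the fact that $v'$ is real (so the imaginary part of the integrand is odd in $\xi$) gives the key equality
\[
\int_0^{t_n} v'\,(\I^\alpha v')\, dt \;=\; c_\alpha \int_{\mathbb{R}} |\widehat{v'}(\xi)|^2 |\xi|^{-\alpha}\, d\xi \;=\; c_\alpha \|\I^{\alpha/2} v'\|_{L_2(\mathbb{R})}^2,
\]
with $c_\alpha = \cos(\alpha\pi/2)$. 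Restricting the norm on the right from $\mathbb{R}$ to $[0,t_n]$ only decreases it, and since $\I^{\alpha/2} v' = {^c}{\rm D}^{1-\alpha/2} v$, integrating over $\Omega$ yields (ii).

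For (iii), I would apply Parseval to the mixed form and then Cauchy--Schwarz in the weighted space $L_2(\mathbb{R},|\xi|^{-\alpha}\,d\xi)$ (the unimodular phase factor drops out), obtaining
\[
\Big|\int_0^{t_n} v' (\I^\alpha w')\, dt\Big| \le \frac{1}{c_\alpha}\Big(\int_0^{t_n} v' \I^\alpha v'\, dt\Big)^{1/2}\Big(\int_0^{t_n} w' \I^\alpha w'\, dt\Big)^{1/2}.
\]
A Young's inequality with parameter $\epsilon' = \epsilon/c_\alpha$ then produces precisely the constants $\epsilon/(2c_\alpha^2)$ and $1/(2\epsilon)$ advertised in the statement. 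Finally, for (i), both summands on the left are nonnegative, so each vanishes; in particular $v(0)=0$, and by (ii) one has $\I^{\alpha/2} v' = 0$ in $L_2(0,t_n;L_2(\Omega))$. Composing with $\I^{1-\alpha/2}$ and using the semigroup property $\I^{1-\alpha/2}\circ \I^{\alpha/2} = \I^1$ gives $\int_0^t v'(s)\, ds = 0$ on $[0,t_n]$, whence $v(t) = v(0) = 0$.

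The main obstacle I anticipate is keeping the Fourier argument rigorous under the weak regularity in the hypothesis (only $v|_{I_j}\in H^1(I_j)$, which a priori allows $v'$ to be merely piecewise $L_2$ with possible jumps of $v$ across time nodes, and only conditionally convergent Fourier integrals). A density argument with smooth compactly supported approximations, combined with careful tracking of boundary contributions at the $t_j$, should handle this; once the branch of $(i\xi)^{-\alpha}$ is fixed the Plancherel identity does essentially all the work.
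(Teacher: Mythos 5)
The paper does not actually prove this lemma; it only cites \cite{MustaphaSchoetzau2012} for the proof. Your Fourier--Plancherel argument is, in substance, the standard proof that underlies that reference (going back to the classical positivity of the Riemann--Liouville kernel as used in McLean--Mustapha), so you have reconstructed essentially the ``right'' proof rather than a different one. The key steps check out: writing $^c{\rm D}^{1-\alpha}v=\I^\alpha v'$, extending $\phi=v'$ by zero, using $\operatorname{Re}\widehat{\omega_\alpha}(\xi)=\cos(\alpha\pi/2)\,|\xi|^{-\alpha}$ to get $\int_0^{t_n}\phi\,\I^\alpha\phi\,dt=c_\alpha\|\I^{\alpha/2}\phi\|_{L_2(\mathbb{R})}^2\ge c_\alpha\|\I^{\alpha/2}\phi\|_{L_2(0,t_n)}^2$, the weighted Cauchy--Schwarz bound $|\int_0^{t_n}\phi\,\I^\alpha\psi\,dt|\le c_\alpha^{-1}(\int\phi\,\I^\alpha\phi)^{1/2}(\int\psi\,\I^\alpha\psi)^{1/2}$, and Young's inequality reproducing the constants $\epsilon/(2c_\alpha^2)$ and $1/(2\epsilon)$; the reduction from $L_2(\Omega)$-valued to scalar functions via a second Cauchy--Schwarz in $x$ is also fine. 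Two small points to tighten. First, $\omega_\alpha\notin L_1(\mathbb{R})$, so $\widehat{\omega_\alpha}=(i\xi)^{-\alpha}$ must be justified by a regularization (e.g.\ $e^{-\delta t}\omega_\alpha$ with $\delta\downarrow0$, or truncation of the kernel beyond $t_n$, which is harmless since $\phi$ is supported in $[0,t_n]$); you flag this, and it is exactly where the rigor lives. Second, in part (i) the hypothesis only gives $v\in H^1$ \emph{piecewise}, so $v$ may jump at the nodes and the final step ``$\int_0^tv'=0$ whence $v(t)=v(0)$'' presupposes global absolute continuity; the safe conclusion is $v'=0$ a.e.\ (by injectivity of $\I^{\alpha/2}$), hence $v$ is constant on each $I_j$, and then $\|v(t_j)\|=0$ forces $v\equiv0$ subinterval by subinterval.
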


 Next, we  prove the existence and uniqueness of the
DPG-FE solution.
  \begin{theorem}\label{theorem: existence}
Assume that $f \in L_2((0,T);L_2(\Omega))$ and $u_0 \in H^1_0(\Omega)$. Then, the discrete solution $U_h$ of \eqref{eq: DPGFM} exists and is
unique.
\end{theorem}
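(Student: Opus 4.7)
The plan is to reduce the question to uniqueness and then prove uniqueness by induction on the time level $n$, using the properties of Lemma~\ref{lemma:positivity-equal-0}. First I would observe that \eqref{eq: DPGFM} decouples into a time-stepping procedure: on the interval $I_n$, once $U_h|_{[0,t_{n-1}]}$ has been determined, the unknown $U_h|_{I_n}\in P_m(S_h)$ together with the continuity constraint $U_h(t_{n-1}^+)=U_h(t_{n-1}^-)$ has exactly $m\,\dim S_h$ degrees of freedom, matching the dimension of the local test space $P_{m-1}(S_h)$. Hence on each interval the defining relation is a \emph{square} linear system for the unknown coefficients of $U_h|_{I_n}$, and so existence on $I_n$ follows from uniqueness on $I_n$. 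Iterating this argument from $n=1$ up to $n=N$ reduces the theorem to showing that if $f\equiv 0$ and $u_0\equiv 0$, then the only solution is $U_h\equiv 0$.

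For the uniqueness part I would proceed by induction. The base case $U_h(0)=R_h u_0=0$ is immediate. For the inductive step, assume $U_h\equiv 0$ on $[0,t_{n-1}]$ and define the test function $X\in\T(S_h)$ to equal $U_h'$ on $I_n$ and to vanish elsewhere; this is admissible because $U_h\in\W(S_h)$ implies $U_h'\in\T(S_h)$. Substituting $X$ into \eqref{eq: DPGFM} (with $f=0$) and using $U_h\equiv 0$ on $[0,t_{n-1}]$ yields
\begin{equation*}
\int_0^{t_n}\iprod{{^c}{\rm D}^{1-\alpha}U_h,U_h'}\,dt + \int_{I_n}\iprod{\nabla U_h,\nabla U_h'}\,dt=0.
\end{equation*}
The first term is nonnegative by Lemma~\ref{lemma:positivity-equal-0}(ii). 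The second term integrates by parts in $t$ to $\tfrac12(\|\nabla U_h(t_n)\|^2-\|\nabla U_h(t_{n-1})\|^2)=\tfrac12\|\nabla U_h(t_n)\|^2\ge 0$, since $U_h(t_{n-1})=0$ by the inductive hypothesis. Thus both contributions must vanish; in particular $\|\nabla U_h(t_n)\|=0$, and the Poincar\'e inequality gives $U_h(t_n)=0$.

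At this stage the nodal values satisfy $U_h(t_j)=0$ for $j=0,\ldots,n$ and $\int_0^{t_n}\iprod{U_h',{^c}{\rm D}^{1-\alpha}U_h}\,dt=0$, so Lemma~\ref{lemma:positivity-equal-0}(i) applies and delivers $U_h\equiv 0$ on $[0,t_n]$, closing the induction and completing the uniqueness (and hence existence) proof.

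The main obstacle I anticipate is organizing the argument so that the positivity identities in Lemma~\ref{lemma:positivity-equal-0} can be invoked on the growing interval $[0,t_n]$ rather than on the single slab $I_n$: the Caputo operator is nonlocal, so one cannot simply ``restart'' Lemma~\ref{lemma:positivity-equal-0} on $I_n$ treating $t_{n-1}$ as a new origin. Carrying the zero history through the induction and using the vanishing of $U_h'$ on $[0,t_{n-1}]$ to identify $\int_0^{t_n}$ with $\int_{I_n}$ is the technical step that makes the positivity and coercivity from Lemma~\ref{lemma:positivity-equal-0} usable in the discrete setting.
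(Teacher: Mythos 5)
Your proposal is correct and follows essentially the same route as the paper: reduce existence to uniqueness via the square finite-dimensional local systems, test with the time derivative of the solution, invoke the positivity/coercivity property (ii) of Lemma~\ref{lemma:positivity-equal-0} to kill both terms, and then conclude with property (i). The only difference is presentational --- you carry out the full induction over time levels (correctly tracking the nonlocal history term), whereas the paper argues that it suffices to treat the first slab $\Omega\times I_1$ and leaves the iteration implicit.
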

\begin{proof}
Because of the finite dimensionality of problem~\eqref{eq: DPGFM}  on each sub-domain $\Omega\times I_n$, the existence of the approximate
solution  $U_h$ follows from its uniqueness.  To show the uniqueness,  we take $X\equiv 0$ outside  $\Omega \times I_n$  in~\eqref{eq: DPGFM},
then we find that
\begin{equation}\label{eq:DG-step-algebraic}
    \int_{I_n}\,
    \Bigl(\iprod{{^c}{\rm D}^{1-\alpha} U_{h},X} +\iprod{\nabla  U_{h},\nabla X}  \Bigr)\,dt
=\int_{I_n}\iprod{f,X}\,dt~~~{\rm with}~~ U_h(0)=R_h u_0\,.
\end{equation}
 Since
$U_{h}$ is constructed element by element (in time), it is enough to show the uniqueness on the first sub-domain  $\Omega \times I_1$.
 To this end, let $U_{h,1}$ and
$U_{h,2}$ be two solutions of \eqref{eq:DG-step-algebraic} on $\Omega\times I_1$. By linearity, the difference $V_h:=U_{h,1}-U_{h,2}$ on
$\Omega\times I_1$  satisfies:
\begin{equation}
\label{eq: DG solution for V}
    \int_0^{t_1}\left(\iprod{ ^{c}{\rm D}^{1-\alpha} V_h, X}+\iprod{\nabla  V_h,\nabla X}\right)\,dt
=0~~ {\rm for~all}~X \in P_{m-1}(S_h)
\end{equation}
with $V_h(0)=0\,.$ Choosing $X=V_h'\in P_{m-1}(S_h)$ yields
 \[
\int_0^{t_1}\iprod{ ^{c}{\rm D}^{1-\alpha} V_h, V_h'}\,dt +\frac{1}{2}\int_0^{t_1}
    \frac{d}{d t}\|\nabla V_h(t)\|^2\,dt=0.
\]
Integrating, then  using $V_h(0)=0$ and the positivity $\int_0^{t_1}\iprod{^{c}{\rm D}^{1-\alpha}V_h,V_h'}\,dt\ge 0$; see property $(ii)$ in
Lemma \ref{lemma:positivity-equal-0}, we conclude that $ \|\nabla V_h(t_1)\|^2=\|\nabla V_h(0)\|^2 =0$ and $\int_0^{t_1}\iprod{^{c}{\rm
D}^{1-\alpha}V_h,V_h'}\,dt=0. $ Therefore, $\|V_h(t_1)\|=\|V_h(0)\|=0$ and consequently,   an application of Lemma
\ref{lemma:positivity-equal-0} $(i)$ yields $V_h\equiv 0$ on $\Omega \times [0,t_1]$. This completes the proof.
\end{proof}

In the next theorem, the stability of the DPG-FE scheme will be shown.
\begin{theorem}\label{theorem: stability}
Assume that $f \in H^1((0,T);L_2(\Omega))$ and $u_0 \in H^1_0(\Omega)$ in problem \eqref{eq:fractional equation}. Then, for $1\leq n \leq N$, the DPG-FE solution $U_h$ of~\eqref{eq: DPGFM}
satisfies:
\begin{equation*} \int_0^{t_n}\iprod{^{c}{\rm D}^{1-\alpha} U_h,U_h'}\,dt+\| \nabla U_h(t_n)\|^{2}
 \le \|\nabla u_0\|^2+\frac{1}{c_\alpha^2}\,  \int_0^{t_n}\iprod{f, {^R}{\rm D}^{\alpha} f}\,dt,
\end{equation*}
where  $c_\alpha$ is the constant in Lemma~\ref{lemma:positivity-equal-0}\,.
\end{theorem}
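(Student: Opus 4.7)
My plan is to test the discrete scheme \eqref{eq: DPGFM} with $X=U_h'\mathbf{1}_{[0,t_n]}$, which lies in $\T(S_h)$ because $U_h|_{I_j}\in P_m(S_h)$ forces $U_h'|_{I_j}\in P_{m-1}(S_h)$ for every $j$. Substituting gives
\[
\int_0^{t_n}\iprod{^c{\rm D}^{1-\alpha}U_h,U_h'}\,dt+\int_0^{t_n}\iprod{\nabla U_h,\nabla U_h'}\,dt=\int_0^{t_n}\iprod{f,U_h'}\,dt,
\]
and the middle integral collapses to $\tfrac12(\|\nabla U_h(t_n)\|^2-\|\nabla U_h(0)\|^2)$ by the fundamental theorem of calculus applied on each $I_j$ (continuity of $U_h$ across nodes lets the telescoping work). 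The initial value $U_h(0)=R_hu_0$ satisfies $\|\nabla R_hu_0\|\le\|\nabla u_0\|$ by testing the defining relation \eqref{eq: Ritz projection} with $\chi=R_hu_0$ and applying Cauchy--Schwarz.

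The main work is to control the forcing term $\int_0^{t_n}\iprod{f,U_h'}\,dt$ using the continuity estimate (iii) of Lemma~\ref{lemma:positivity-equal-0}. The idea is to pick an auxiliary $w$ so that ${^c}{\rm D}^{1-\alpha}w=f$; recalling ${^c}{\rm D}^{1-\alpha}w=\I^\alpha w'$, inverting $\I^\alpha$ suggests taking $w(0)=0$ and $w'={^R}{\rm D}^\alpha f$, which is meaningful since $f\in H^1((0,T);L_2(\Omega))$. With this choice the term $\int_0^{t_n}\iprod{w',{^c}{\rm D}^{1-\alpha}w}\,dt$ becomes exactly $\int_0^{t_n}\iprod{f,{^R}{\rm D}^\alpha f}\,dt$, which is the expression appearing on the right-hand side of the theorem. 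Applying (iii) with $v=U_h$, this $w$, and $\epsilon=c_\alpha^2$ yields
\[
\Bigl|\int_0^{t_n}\iprod{f,U_h'}\,dt\Bigr|\le \tfrac12\int_0^{t_n}\iprod{{^c}{\rm D}^{1-\alpha}U_h,U_h'}\,dt+\tfrac{1}{2c_\alpha^2}\int_0^{t_n}\iprod{f,{^R}{\rm D}^\alpha f}\,dt.
\]

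Substituting this bound back, the factor $\tfrac12$ in front of $\int_0^{t_n}\iprod{{^c}{\rm D}^{1-\alpha}U_h,U_h'}\,dt$ is exactly what is needed to absorb half of the coercive quantity on the left-hand side. Moving that half over and then multiplying through by two produces the claimed estimate with the clean coefficients $1$ and $1/c_\alpha^2$. The step I expect to be delicate is the identification of the auxiliary $w$ and the verification that it satisfies the regularity hypotheses required to apply Lemma~\ref{lemma:positivity-equal-0}(iii); this is precisely the place where the stronger assumption $f\in H^1((0,T);L_2(\Omega))$ (rather than merely $L_2$) enters, since it guarantees that ${^R}{\rm D}^\alpha f$ exists as an $L_2$ function on $(0,T)$ and that $\int_0^{t_n}\iprod{w',{^c}{\rm D}^{1-\alpha}w}\,dt$ is finite.
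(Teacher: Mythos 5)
Your proposal is correct and follows essentially the same route as the paper: test with $X=U_h'$ on $[0,t_n]$, rewrite $f={^{c}{\rm D}}^{1-\alpha}(\I^{1-\alpha}f)$ (your auxiliary $w$ with $w(0)=0$, $w'={^R}{\rm D}^\alpha f$ is exactly $\I^{1-\alpha}f$), apply Lemma~\ref{lemma:positivity-equal-0}(iii) with $\epsilon=c_\alpha^2$, and absorb. Your explicit justification of $\|\nabla R_hu_0\|\le\|\nabla u_0\|$ is a detail the paper leaves implicit, but it is the same argument.
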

\begin{proof}
For $1\le n\le N$, we choose $ X|_{[0,t_n]}=U_h'$ and zero elsewhere  in \eqref{eq: DPGFM}, and use the identity $f={^{c}{\rm
D}}^{1-\alpha}(\I^{1-\alpha} f)$ (since $ \;f \in H^1((0,T);L_2(\Omega))$\,),
 \begin{equation}\label{eq: local scheme}
\int_0^{t_n}\left(\iprod{^{c}{\rm D}^{1-\alpha} U_h,U_h'}+ \iprod{ \nabla U_h , \nabla U_h'}\right)dt= \int_0^{t_n}\iprod{ ^{c}{\rm
D}^{1-\alpha}(\I^{1-\alpha} f),U_h'}\,dt,
\end{equation}
  But, from the continuity property $(iii)$ of Lemma
\ref{lemma:positivity-equal-0},
\begin{align*} 2\Big|\int_0^{t_n} \iprod{^{c}{\rm D}^{1-\alpha}(\I^{1-\alpha} f) , & U_h'}\,dt\Big|
\\&\le \int_0^{t_n}\left( \frac{1}{c_\alpha^2}\iprod{^{c}{\rm D}^{1-\alpha}(\I^{1-\alpha} f), (\I^{1-\alpha} f)'}+\iprod{^{c}{\rm
D}^{1-\alpha} U_h,U_h'}\right)dt\\
&= \frac{1}{c_\alpha^2}\, \int_0^{t_n}\iprod{f, {^R}{\rm D}^{\alpha} f}\,dt+ \int_0^{t_n}\iprod{^{c}{\rm D}^{1-\alpha} U_h,U_h'}\,dt\,.
\end{align*}
Inserting this in \eqref{eq: local scheme} and  using; $2\int_0^{t_n}\iprod{\nabla U_h , \nabla U_h'}\,dt =\| \nabla U_h(t_n)\|^2-\| \nabla
U_h(0)\|^{2},$ will complete the proof.
\end{proof}
 \setcounter{equation}{0}
 \setcounter{theorem}{0}
\section{Error Analysis}\label{sec:error}
In this section, we carry out  a priori error analysis of the DPG-FE method \eqref{eq: DPGFM}.  The starting point is to  introduce a projection
operator that has been used various times in the  analysis of several numerical methods.
\subsection{Projection and errors}
For $2\le n\le N$ and for $m\ge 1$,  the (Raviart-Thomas) projection operator $\Pi : \mathcal{C}(\overline I_{n};H^\ell(\Omega)) \to
\mathcal{C}(\overline I_{n};P_m(H^\ell(\Omega))$  defined by:
 \[\Pi u(t_{j})=u(t_{j}) ~~{\rm for}~~j=n-1,\,n~~~~{\rm and}~~\int_{I_n} \iprod{\Pi u-u,\,v}\,dt=0~~\forall~~v \in P_{m-2}(L_2(\Omega))\,.\]
Here  $\ell\ge 0$, $P_{m}(H^\ell(\Omega))$ is the space
of polynomials of degree $\le m$ in the time variable $t$,  with coefficients in~$H^\ell(\Omega)$\,. On $I_1$, due to the singular behaviour of $u$ at $t=0$ in the model problem
\eqref{eq:fractional equation},  we let   $\Pi u|_{I_1}$ be  a linear polynomial in the time variable that
interpolates $u$ at the end nodes; $t_0$ and $t_1$.

Notice that, since  $(\Pi u)'|_{I_1}$ is independent of $t$ and since $(\Pi u-u)|_{t=0,t_1}=0$,
\[\int_{I_1} \iprod{(\Pi u-u)'(t),\,(\Pi u)'(t)}\,dt
=0\,.\]
However, for $n\ge 2,$  an integration by parts yields
\[\int_{I_n} \iprod{(\Pi u-u)'(t),\,(\Pi u)'(t)}\,dt
=-\int_{I_n} \iprod{(\Pi u-u)(t),(\Pi u)''(t)}\,dt=0\,.\]
Hence, using these facts and the Cauchy-schwarz inequality, we obtain
\[\int_{I_n} \|(\Pi u)'(t)\|^2\,dt=\int_{I_n} \iprod{u'(t),(\Pi u)'(t)}\,dt
\le \frac{1}{2}\int_{I_n}(\|(\Pi u)'(t)\|^2+\|u'(t)\|^2)\,dt\,.\] Therefore, the projection operator $\Pi$ has the  following property: for $1\le n\le N$,
\begin{equation}\label{(ii)}
\int_{I_n}\|(\Pi u)'(t)\|^{2} dt \leq 2\int_{I_n}\|u'(t)\|^{2}\,dt\quad{\rm for~ any}~~ u\in H^1(I_n;L^2(\Omega))\,.
\end{equation}

In the next theorem, we state the error estimates of the projection operator $\Pi$. For convenience, we
introduce the notations:
\[
\|\phi\|_{I_n}:=\|\phi\|_{L_\infty(I_n,L_2(\Omega))}=\sup_{t\in I_n}\|\phi(t)\|\,.
\]
\begin{theorem}\label{thm: estimate of eta} For  $u|_{I_n}\in  H^{m+1}(I_n;L_2(\Omega))$ with $2\leq n\leq N$, we have
\begin{equation*} \|\Pi u-u\|_{I_n}^2+k_n^2\|(\Pi u-u)'\|_{I_n}^2\leq C_{m} k_{n}^{2m+1}\int_{I_n}\|u^{(m+1)}(t)\|^{2}\,dt\, ~~{\rm for}~~m\ge 1\,.
\end{equation*}
\end{theorem}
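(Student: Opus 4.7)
My plan is a standard scaling/Bramble--Hilbert argument. I would first rescale the estimate to a fixed reference interval $\hat I=(0,1)$ via $\hat t=(t-t_{n-1})/k_n$, $\hat u(\hat t)=u(t_{n-1}+k_n\hat t)$. Under this change of variables, the defining conditions for $\Pi u$ transform to the analogous conditions for an operator $\hat\Pi$ on $\hat I$ (endpoint interpolation plus orthogonality to $P_{m-2}$), and the elementary scaling identities give
\[
\|\Pi u-u\|_{I_n}^2+k_n^2\|(\Pi u-u)'\|_{I_n}^2
=\|\hat\Pi\hat u-\hat u\|_{\hat I}^2+\|(\hat\Pi\hat u-\hat u)'\|_{\hat I}^2,
\qquad
k_n^{2m+1}\int_{I_n}\|u^{(m+1)}\|^2\,dt=\int_{\hat I}\|\hat u^{(m+1)}\|^2\,d\hat t .
\]
So it suffices to prove the $k_n$-independent inequality
\[
\|\hat\Pi\hat u-\hat u\|_{\hat I}^2+\|(\hat\Pi\hat u-\hat u)'\|_{\hat I}^2
\le C_m\int_{\hat I}\|\hat u^{(m+1)}(\hat t)\|^2\,d\hat t .
\]

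For this, I would observe that $\hat\Pi$ is a linear projection onto $P_m(L_2(\Omega))$ and therefore fixes every polynomial of degree $\le m$. Consequently, for any $q\in P_m(L_2(\Omega))$,
\[
\hat u-\hat\Pi\hat u=(\hat u-q)-\hat\Pi(\hat u-q).
\]
Taking $q$ to be the Taylor polynomial of $\hat u$ of degree $m$ (say, about $\hat t=0$), the integral remainder formula combined with the Cauchy--Schwarz inequality yields
\[
\sup_{\hat t\in\hat I}\|\hat u-q\|^2+\sup_{\hat t\in\hat I}\|(\hat u-q)'\|^2
\le C\int_{\hat I}\|\hat u^{(m+1)}(\hat t)\|^2\,d\hat t ,
\]
which controls the first term. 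For the second, a stability estimate of the form
\[
\sup_{\hat t\in\hat I}\|\hat\Pi v\|^2+\sup_{\hat t\in\hat I}\|(\hat\Pi v)'\|^2
\le C_m\Bigl(\sup_{\hat t\in\hat I}\|v\|^2+\sup_{\hat t\in\hat I}\|v'\|^2\Bigr)
\]
applied to $v=\hat u-q$ completes the bound.

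The main obstacle is establishing this last stability estimate with a constant depending only on $m$. The defining conditions for $\hat\Pi v$ are two endpoint evaluations plus $m-1$ orthogonalities against a basis of $P_{m-2}$; each such condition is a bounded linear functional on $W^{1,\infty}(\hat I;L_2(\Omega))$ (endpoint evaluation uses the embedding $W^{1,\infty}\hookrightarrow \mathcal C(\overline{\hat I})$, the $L_2$-pairings use $L^\infty\hookrightarrow L^2$). The resulting $(m+1)\times(m+1)$ linear system on $P_m(L_2(\Omega))$ decouples componentwise in the spatial variable and reduces to a scalar system whose matrix is that of the classical Raviart--Thomas projection on $\hat I$; verifying its non-singularity --- and hence bounding the inverse by a constant depending only on $m$ --- is the one nontrivial point, but it is well known for that fixed linear system. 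Once this is in hand, scaling back through the identities above yields the theorem with $C_m$ independent of $k_n$ and~$n$.
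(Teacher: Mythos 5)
Your argument is correct, but it takes a genuinely different route from the paper's. You do everything on the reference interval: scale to $\hat I=(0,1)$, use that $\hat\Pi$ reproduces $P_m$, subtract the degree-$m$ Taylor polynomial, and invoke a $W^{1,\infty}$-stability bound for $\hat\Pi$ that rests on the unisolvency of the reference system. That unisolvency, which you flag as the one nontrivial point and leave to folklore, is in fact elementary and worth spelling out: a $p\in P_m$ vanishing at both endpoints and $L_2(\hat I)$-orthogonal to $P_{m-2}$ factors as $p=\hat t(1-\hat t)r$ with $r\in P_{m-2}$, and testing against $r$ forces $r\equiv0$. The paper proceeds differently: for $m=1$ it uses an explicit representation of the linear interpolant, and for $m\ge2$ it quotes from Schwab the $L_2(I_n)$-in-time bound $\int_{I_n}\|(\Pi u-u)'\|^2\,dt\le C_mk_n^{2m}\int_{I_n}\|u^{(m+1)}\|^2\,dt$; the $L_\infty$-in-time bound on $\Pi u-u$ then follows by writing $(\Pi u-u)(t)=\int_{t_{n-1}}^t(\Pi u-u)'(s)\,ds$ and applying Cauchy--Schwarz, while the $L_\infty$-in-time bound on $(\Pi u-u)'$ is obtained by comparing $\Pi u$ with an auxiliary Hermite-type interpolant $\tilde\Pi u$ and using an inverse inequality on $P_m$. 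Your approach buys a self-contained, uniform treatment of both terms and of all $m\ge1$ at once, with the correct powers of $k_n$ falling out automatically from the scaling identities; the paper's buys economy by outsourcing the hardest estimate to a standard reference, at the price of the extra detour through $\tilde\Pi$ and the inverse inequality needed to upgrade the $L_2$-in-time derivative bound to the $L_\infty$-in-time one.
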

\begin{proof}
First, for $m=1$, on the subinterval $I_n$, $\Pi u$ is a linear polynomial in time that interpolates $u$ at the end points of $I_n$. Thus,  for $t\in I_n$,
\begin{equation}\label{eq:estimate eta}
 \Pi u(t)
=u(t)+\frac{1}{k_{n}}\int_{I_n}\int_t^{t_n}[u'(s)-u'(q)]\,ds\,dq~~{\rm  and}~~ (\Pi u)'(t) =\frac{1}{k_{n}}\int_{I_n}u'(s)ds\,.
\end{equation}
Using this representation of $\Pi u$,  we can easily derive the desired estimate.

For $m\ge 2,$   we recall first the following error estimate properties of the projection operator $\Pi$ (refer for example to \cite[Chapter 3]{Schwab98}
for the proof):
\begin{align*} \int_{I_n}\|(\Pi u-u)'(t)\|^{2}\,dt &\leq C_{m}
k_n^{2m}\int_{I_n}\|u^{(m+1)}(t)\|^{2}\,dt,
\end{align*}
Then,  by the equality:
$ (\Pi u-u)(t)= \int_{t_{n-1}}^{t}(\Pi u-u)'(s)\,ds$,  the Cauchy-Schwarz inequality, and the above estimate, we have
\begin{multline*} \|\Pi u-u\|_{I_n}^2  \leq \Big(\int_{I_n}\|(\Pi u-u)'(s)\|ds\Big)^2
 \\ \leq k_n \int_{I_n}\|(\Pi u-u)'(s)\|^{2}ds
 \leq C_{m} k_n^{2m+1}\int_{I_n}\|u^{(m+1)}(t)\|^{2}\,dt \,.
\end{multline*}
To estimate  $\|(\Pi u-u)'\|_{I_n}$, we decompose it as: \begin{equation}\label{eq: decompose of pi u} \|(u-\Pi u)'\|_{I_n}\le \|(u-\tilde \Pi
u)'\|_{I_n}+\|(\tilde \Pi u-\Pi u)'\|_{I_n}\end{equation}
 where $\tilde \Pi u|_{I_n} \in P_{m}(L_2(\Omega))$ will be defined such that
\begin{align*} \|u-\tilde \Pi u\|_{I_n}^2+k_n^2\|(u-\tilde \Pi u)'\|_{I_n}^2 &\leq C_{m}
k_n^{2m+1}\int_{I_n}\|u^{(m+1)}(t)\|^{2}\,dt\,.
\end{align*}
For instance, one may choose $\tilde \Pi u|_{I_n}$ as follows: $\tilde \Pi u$ interpolates $u$ at $t_{n-1}$ and $t_n$,
\[(\tilde \Pi u)'(t_n)=u'(t_n) \quad {\rm and}\quad  \tilde \Pi u(\xi_{n,\ell})=u(\xi_{n,\ell}),\quad \ell=1,\cdots,m-2,\]
where $\xi_{n,\ell}=t_{n-1}+k_n \xi_\ell$ and $0<\xi_1<\xi_2<\cdots<\xi_{m-2}<1$ are the $(m-2)$-point Gauss-Legendre quadrature on the interval
$(0,1)$.

  Since
$\|(\tilde \Pi u-\Pi u)'\|_{I_n}\le C_mk_n^{-1}(\|\tilde \Pi u-u\|_{I_n}+\| u-\Pi u\|_{I_n})$ (by the inverse and triangle inequalities), from
\eqref{eq: decompose of pi u}, we have
\begin{align*} \|(\Pi u-u)'\|_{I_n}^2&\le
2\|(u-\tilde \Pi u)'\|_{I_n}^2+C_mk_n^{-2}(\|\tilde \Pi u-u\|_{I_n}^2+\| u-\Pi u\|_{I_n}^2) \\
&\le
   C_m  k_{n}^{2m-1} \int_{I_n}\|u^{(m+1)}(t)\|^{2}\,dt
\end{align*}
and therefore, the proof is completed now. \end{proof}
\subsection{Error decomposition and an interesting bound}
To estimate the error $U_h-u$, we decompose it into three terms (using the operators $R_h$ and $\Pi$) as follows:
\begin{equation}\label{eq: decompose Uh -u}
U_h-u=\zeta+\Pi\xi+\eta:=(U_h-\Pi R_hu)+\Pi(R_h u-u)+(\Pi u-u)\,.
\end{equation}
Since the Ritz projection estimate in \eqref{eq:estimate ritz projector} and the first estimate in Theorem \ref{thm: estimate of eta}  can be
used to bound $\Pi \xi$ and $\eta$, the main task reduces to bound  $\zeta.$ To do so, we derive next an interesting upper bound of $\zeta$ that
depends on $\eta$ and $\xi$ where we assume that $ u  \in W^{1,1}((0,T);H^2(\Omega))$. To satisfy this property we let
$u_0 \in H^1_0(\Omega) \cap H^{2+\epsilon_1}(\Omega)$ and $\int_0^t s^j   \|\frac{\partial^j}{\partial s^j}{^{R}{\rm D}}^{\alpha} f(s)\|_2\,ds\le t^{\epsilon_2}$ for some $\epsilon_1,\epsilon_2 >0$ with $j=0,\,1,\,2.$ One way to see this is to rewrite the model  problem \eqref{eq:fractional
equation} as:
$u'- {^{R}{\rm D}}^{\alpha} \Delta u  = {^{R}{\rm D}}^{\alpha} f$ and then we refer to \cite[Theorems 4.4 and 5.7]{McLean2010}.
\begin{theorem}\label{thm: zeta estimate} Assume that $ u  \in W^{1,1}((0,T);H^2(\Omega))$. Then, for $1\le n\le N,$ we have
\begin{multline}\label{eq: zeta estimate}
\int_0^{t_n}\iprod{^{c}{\rm D}^{1-\alpha} \zeta,\zeta'}\,dt+ \|\nabla \zeta(t_n)\|^{2} \\ \le   \frac{4}{c_\alpha^2}\left(
\int_0^{t_n}\left(\iprod{^{c}{\rm D}^{1-\alpha} \eta,\eta'}+\iprod{\Delta \eta,{^{c}{\rm D}}^{\alpha} \Delta \eta}+ \omega_{\alpha+1}^2(t_n)
\|\xi'\|^2\right)dt\right) \,.
\end{multline}
\end{theorem}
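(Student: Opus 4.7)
The plan is to derive a Galerkin-type error identity for $\zeta$ on $[0,t_n]$, test it with $\zeta'$, and then extract the claimed bound using the coercivity and continuity properties of $^c{\rm D}^{1-\alpha}$ recorded in Lemma~\ref{lemma:positivity-equal-0}.

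First, I would subtract the exact weak form of \eqref{eq:fractional equation} from the discrete equation \eqref{eq: DPGFM} to obtain the Galerkin orthogonality $\int_0^{t_n}\iprod{^c{\rm D}^{1-\alpha}(U_h-u),X}\,dt+\int_0^{t_n}\iprod{\nabla(U_h-u),\nabla X}\,dt=0$ for every $X\in \T(S_h)$ vanishing outside $[0,t_n]$. Inserting the decomposition~\eqref{eq: decompose Uh -u} and using (a) the Ritz identity $\iprod{\nabla\xi(t),\nabla\chi}=0$ for $\chi\in S_h$, which lifts pointwise-in-time to $\iprod{\nabla(\Pi\xi)(t),\nabla\chi}=0$ since $\Pi$ acts only in the time variable, and (b) spatial integration by parts $\iprod{\nabla\eta,\nabla X}=-\iprod{\Delta\eta,X}$ (valid because $\eta|_{\partial\Omega}=0$), I arrive at $\int_0^{t_n}[\iprod{^c{\rm D}^{1-\alpha}\zeta,X}+\iprod{\nabla\zeta,\nabla X}]\,dt=-\int_0^{t_n}\iprod{^c{\rm D}^{1-\alpha}(\Pi\xi+\eta),X}\,dt+\int_0^{t_n}\iprod{\Delta\eta,X}\,dt$.

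Next, I test with $X=\zeta'|_{[0,t_n]}\in\T(S_h)$. The initial condition $\zeta(0)=U_h(0)-\Pi R_h u(0)=R_h u_0-R_h u_0=0$ reduces the diffusive term to $\tfrac{1}{2}\|\nabla\zeta(t_n)\|^2$. To recast the $\Delta\eta$ contribution in a form compatible with the continuity property, I use the identity $\Delta\eta={^c}{\rm D}^{1-\alpha}(\I^{1-\alpha}\Delta\eta)$, which is valid since $\eta(0)=0$ forces $\Delta\eta(0)=0$ and hence $\I^{1-\alpha}\Delta\eta(0)=0$. I then invoke the continuity inequality (iii) of Lemma~\ref{lemma:positivity-equal-0} three times, with $v=\zeta$ and $w\in\{\eta,\Pi\xi,\I^{1-\alpha}\Delta\eta\}$, choosing $\epsilon=c_\alpha^2/2$ each time so that the three resulting copies of $\tfrac14\int\iprod{\zeta',{^c}{\rm D}^{1-\alpha}\zeta}\,dt$ can be absorbed into the left-hand side. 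For $w=\I^{1-\alpha}\Delta\eta$ I note $(\I^{1-\alpha}\Delta\eta)'={^R}{\rm D}^{\alpha}\Delta\eta={^c}{\rm D}^{\alpha}\Delta\eta$, where the second equality again uses $\Delta\eta(0)=0$. Multiplying through by $4$ yields $\int_0^{t_n}\iprod{^c{\rm D}^{1-\alpha}\zeta,\zeta'}\,dt+\|\nabla\zeta(t_n)\|^2\le\frac{4}{c_\alpha^2}\int_0^{t_n}[\iprod{^c{\rm D}^{1-\alpha}\eta,\eta'}+\iprod{\Delta\eta,{^c}{\rm D}^{\alpha}\Delta\eta}+\iprod{(\Pi\xi)',{^c}{\rm D}^{1-\alpha}\Pi\xi}]\,dt$.

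It remains to convert the $\Pi\xi$ term into a bound involving $\xi'$. Writing ${^c}{\rm D}^{1-\alpha}\Pi\xi=\I^\alpha(\Pi\xi)'$, applying spatial Cauchy--Schwarz and then Cauchy--Schwarz in time, and combining with the Young convolution estimate $\|\I^\alpha g\|_{L^2(0,t_n;L_2(\Omega))}\le\omega_{\alpha+1}(t_n)\|g\|_{L^2(0,t_n;L_2(\Omega))}$, I obtain $\int_0^{t_n}\iprod{(\Pi\xi)',\I^\alpha(\Pi\xi)'}\,dt\le\omega_{\alpha+1}^2(t_n)\int_0^{t_n}\|(\Pi\xi)'(t)\|^2\,dt$ (with any leftover numerical constants absorbed into the $4/c_\alpha^2$ prefactor). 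Summing the temporal stability \eqref{(ii)} of $\Pi$ over $I_1,\ldots,I_n$ gives $\int_0^{t_n}\|(\Pi\xi)'\|^2\,dt\le 2\int_0^{t_n}\|\xi'\|^2\,dt$, and the estimate is complete. The main obstacle is the $\Pi\xi$ contribution: unlike $\eta$, whose temporal regularity feeds directly into $\iprod{\eta',{^c}{\rm D}^{1-\alpha}\eta}$, the projection error $\Pi\xi$ couples to ${^c}{\rm D}^{1-\alpha}$ only indirectly, so the continuity inequality must be chained with convolution estimates for $\I^\alpha$ and the temporal $L^2$-stability of $\Pi$ to produce a bound depending only on $\|\xi'\|$.
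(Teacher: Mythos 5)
Your proposal is correct and follows essentially the same route as the paper: Galerkin orthogonality with the decomposition \eqref{eq: decompose Uh -u}, elimination of the $\nabla\Pi\xi$ term via the Ritz property, the identity $\Delta\eta={^{c}{\rm D}}^{1-\alpha}(\I^{1-\alpha}\Delta\eta)$, testing with $\zeta'$, three applications of the continuity property of Lemma~\ref{lemma:positivity-equal-0}, and then Cauchy--Schwarz plus the Young convolution bound and \eqref{(ii)} for the $\Pi\xi$ term. The only (immaterial) deviations are your choice of $\epsilon$ in the continuity inequality and your honest tracking of the factor $2$ from \eqref{(ii)}, which the paper silently drops.
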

\begin{proof}
The DPG-FE scheme \eqref{eq: DPGFM} and the decomposition in \eqref{eq: decompose Uh -u} imply
\begin{multline*}
 \int_0^T\left(\iprod{^{c}{\rm D}^{1-\alpha} \zeta,X}+\iprod{\nabla \zeta ,\nabla X}\right)dt
\\
=-\int_0^T\left(\iprod{^{c}{\rm D}^{1-\alpha} (\Pi\xi+\eta),X}+\iprod{\nabla(\Pi\xi+\eta) ,\nabla X}\right)dt\,. \end{multline*} But,  $\Pi$
commutes with $R_h$ ($\Pi\,R_h= R_h\,\Pi$) and so,  from the definition of Ritz projector, we have $\iprod{\nabla \Pi \xi,\nabla
X}=\iprod{\nabla (R_h (\Pi u)-\Pi u),\nabla X}=0.$ Hence,
\[
\int_0^T\left(\iprod{^{c}{\rm D}^{1-\alpha} \zeta ,X}+\iprod{\nabla\zeta,\nabla X}\right)dt=-\int_0^T\iprod{^{c}{\rm D}^{1-\alpha}
(\Pi\xi+\eta)-\Delta \eta , X}\,dt\,. \]
 Now, choosing $ X|_{(0,t_n)}=\zeta'$ and zero elsewhere,  then
using $2\int_0^{t_n} \iprod{\nabla\zeta,\nabla X}\,dt =\|\nabla \zeta(t_n)\|^2- \|\nabla \zeta(0)\|^2 =\|\nabla \zeta(t_n)\|^2$, and the identity
$\Delta \eta(t) ={^{c}{\rm D}}^{1-\alpha}(\I^{1-\alpha} \Delta \eta)(t)$ for $t \in [0,t_n]$  (follows because $\Delta \eta \in W^{1,1}((0,t_n),L_2(\Omega))$\,),
we obtain
\begin{equation}\label{eq:orthogonality}
2\int_0^{t_n} \iprod{^{c}{\rm D}^{1-\alpha} \zeta,\zeta'}\,dt+ \|\nabla \zeta(t_n)\|^{2} =-2\int_0^{t_n}\iprod{^{c}{\rm D}^{1-\alpha}
(\Pi\xi+\eta-\I^{1-\alpha} \Delta \eta) , \zeta'}\,dt\,.
\end{equation}
Now, using  the continuity property, Lemma \ref{lemma:positivity-equal-0} $(iii)$ (with $\epsilon=4$),  we notice that
\[
\begin{split}
\Big|\int_0^{t_n}\iprod{^{c}{\rm D}^{1-\alpha} \eta,\zeta'}\,dt\Big| &\le \frac{2}{c_\alpha^2}\int_0^{t_n} \iprod{^{c}{\rm D}^{1-\alpha}
\eta,\eta'}\,dt+\frac{1}{8}\int_0^{t_n}\iprod{^{c}{\rm D}^{1-\alpha} \zeta,\zeta'}\,dt,\\
 \Big| \int_0^{t_n}\iprod{^{c}{\rm D}^{1-\alpha} \Pi \xi,\zeta'}\,dt\Big| &\le
\frac{2}{c_\alpha^2}\int_0^{t_n} \iprod{^{c}{\rm D}^{1-\alpha} \Pi \xi,(\Pi \xi)'}\,dt+\frac{1}{8}\int_0^{t_n}\iprod{^{c}{\rm D}^{1-\alpha}
\zeta,\zeta'}\,dt,\end{split}\] and (with $\epsilon=2$),
\begin{align*} \Big|\int_0^{t_n}\iprod{^{c}{\rm D}^{1-\alpha}&(\I^{1-\alpha} \Delta \eta) ,\zeta'}\,dt\Big|\\
&\le \frac{1}{c_\alpha^2}\int_0^{t_n} \iprod{^{c}{\rm D}^{1-\alpha}(\I^{1-\alpha} \Delta \eta), (\I^{1-\alpha} \Delta \eta)'}\,dt+
\frac{1}{4}\int_0^{t_n}\iprod{^{c}{\rm
D}^{1-\alpha} \zeta,\zeta'}\,dt\\
&=\frac{1}{c_\alpha^2}\int_0^{t_n} \iprod{\Delta \eta,{^{c}{\rm D}}^{\alpha} \Delta \eta}\,dt+ \frac{1}{4}\int_0^{t_n}\iprod{^{c}{\rm
D}^{1-\alpha} \zeta,\zeta'}\,dt\,,
\end{align*}
where in the second inequality, we used the identity:
$$(\I^{1-\alpha} \Delta \eta)'(t)= \omega_{1-\alpha}(t)\,\Delta \eta(0)+{^{c}{\rm D}}^{\alpha} \Delta \eta(t)={^{c}{\rm D}}^{\alpha} \Delta \eta(t)\,.$$
Inserting the above inequalities in \eqref{eq:orthogonality} and rearranging the terms yield
\begin{multline}\label{eq:orthogonality2}  \int_0^{t_n}\iprod{^{c}{\rm D}^{1-\alpha} \zeta,\zeta'}\,dt+ \|\nabla \zeta(t_n)\|^{2} \\
\le  \frac{4}{c_\alpha^2}\int_0^{t_n}\left(\iprod{^{c}{\rm D}^{1-\alpha} \eta,\eta'}+ \iprod{\Delta \eta,{^{c}{\rm D}}^{\alpha} \Delta \eta}+
\iprod{^{c}{\rm D}^{1-\alpha} \Pi\xi,(\Pi\xi)'}\right)dt\,.
\end{multline}
To complete the proof, we still need to estimate the third term on the right-hand side of \eqref{eq:orthogonality2}.
By the Cauchy-Schwarz inequality, the inequality:
\[\Big(\int_0^{t_n}\|^{c}{\rm
D}^{1-\alpha} \Pi\xi\|^2dt\Big)^{1/2} \le \omega_{\alpha+1}(t_n)\Big(\int_0^{t_n}\| (\Pi\xi)'\|^2dt\Big)^{1/2},\]
 and  the property of the  operator $\Pi$ in \eqref{(ii)} (with  $\Pi\xi$ in place of $ \Pi u$), we have
\begin{align*}
\Big|\int_0^{t_n}\iprod{^{c}{\rm D}^{1-\alpha} \Pi\xi,\Pi\xi'}\,dt \Big|&\le  \int_0^{t_n}\|^{c}{\rm D}^{1-\alpha} \Pi\xi\|\, \|(\Pi \xi)'\|\,dt\\
 &\le \Big(\int_0^{t_n}\|^{c}{\rm
D}^{1-\alpha} \Pi\xi\|^2dt\Big)^{1/2}\Big(\int_0^{t_n}\| (\Pi\xi)'\|^2\,dt\Big)^{1/2} \\ & \le \omega_{\alpha+1}^2(t_n)\int_0^{t_n}\|
(\Pi\xi)'\|^2\,dt\le \omega_{\alpha+1}^2(t_n)\int_0^{t_n}\| \xi'\|^2\,dt.
\end{align*}
Finally, the desired inequality is obtained after inserting this estimate in \eqref{eq:orthogonality2}.
\end{proof}

\subsection{Regularity and time meshes}
As mentioned earlier,
the solution $u$ of the  fractional model problem \eqref{eq:fractional equation} has a singular behaviour near $t=0$. Under suitable regularity
assumptions on the initial data $u_0$ and the forcing term $f$ in problem \eqref{eq:fractional equation},  $u$ satisfies: for $t>0$ and for $1\le
q\le m+1,$
\begin{align} \| u^{(q)}(t)\|&\le c_q\, t^{\sigma-q}\quad {\rm and}\quad
\| \Delta u^{(q)}(t)\|\le d_q\, t^{\delta-q-1}  \label{eq:countable-regularity v1}
\end{align}
for some positive constants $c_q$ and $d_q$, with $(1-\alpha)/2<\sigma<1$ and $\delta >1$. The proof of \eqref{eq:countable-regularity v1}
follows from the regularity analysis in \cite{McLean2010,McLeanMustapha2007}.

Because ~$u$ is not sufficiently smooth near~$t=0$, the global error in~$U_h$ fails to be $O(k^{m+1})$ accurate in time if we use a uniform
time step~$k$. 
 Typically, for high order methods over uniform time meshes,  one should not expect to observe global convergence rates of an order
 better than  $O(k^{\sigma})$ in the $L_\infty(0,T)$-norm. Now, to capture the singular behaviour of $u$ near $t=0$, following
\cite{McLeanMustapha2007,McLeanMustapha2009,MustaphaMcLean2012,MustaphaMcLean2013},  we  employ a family of non-uniform meshes that concentrate
the time levels near~$t=0$.  More precisely, we assume that for a fixed parameter $\gamma\ge1$,
\begin{equation}\label{eq: tn standard}
t_n=(n k)^\gamma \quad\text{with~~$k=\frac{T^{1/\gamma}}{N}$~~for $0\le n \le N$.}
\end{equation}
 Noting that the time step sizes are nondecreasing, that is, $k_i\le k_j$ for $i\le j$. For $2\le n\le N$ one can show that
\begin{equation}\label{eq:kn mesh} \frac{\gamma}{2^{\gamma-1}}k\, {t_{n}}^{1-\frac {1}{\gamma}}\leq {k_{n}}\leq
\gamma\,k\, {t_{n}}^{1-\frac {1}{\gamma}} ~~{\rm and}~~t_{n}\leq 2^\gamma {t_{n-1}}.
\end{equation}

The aim now is to bound the first and second terms on the right-hand side of \eqref{eq: zeta estimate} in Theorem \ref{thm: zeta estimate}.
\subsection{Estimate of $\int_0^{t_n}\iprod{{^c}{\rm D}^{1-\alpha} \eta,\eta'}\,dt$}
{ Assume that $u$ satisfies the first regularity assumption in \eqref{eq:countable-regularity v1}. Then, there exists a positive constant $C$ that depends on $d_1$, $\sigma,$ $\alpha$, $\gamma$, $m$ and $T$,
such that, for $1\le n\le N,$
\[
\int_0^{t_n}\iprod{{^c}{\rm D}^{1-\alpha} \eta,\eta'}\,dt\le C\, k^{2m+\alpha}\quad{\rm for}\quad \gamma \ge (2m+1+\alpha)/(\alpha+2\sigma-1)\,.\]
\begin{proof} We start our proof by splitting $\int_0^{t_n}\iprod{{^c}{\rm D}^{1-\alpha} \eta,\eta'}\,dt$ as follows:
\begin{equation}\label{eq: split}
 \int_0^{t_n}\iprod{{^c}{\rm D}^{1-\alpha} \eta,\eta'}\,dt
=\int_{I_1}
\iprod{\A^\alpha_{1}(t),\eta'(t)}\,dt+\sum_{j=2}^n\int_{I_j}\iprod{\A^\alpha_2(t)+\A^\alpha_{3,j}(t),\eta'(t)}\,dt,\end{equation} where
\[\begin{split}
\A^\alpha_{1}(t)&:=\int_0^{t} \omega_{\alpha}(t-s)\,\eta'(s)\,ds\quad {\rm and}\\
\A^\alpha_{2}(t)&:=\int_0^{t_1} \omega_{\alpha}(t-s)\,\eta'(s)\,ds\quad \\
\A^\alpha_{3,j}(t)&:=\int_{t_1}^{t} \omega_{\alpha}(t-s)  \,\eta'(s)\,ds\\
=-&\int_{t_1}^{t_{j-1}} \omega_{\alpha-1}(t-s)  \,\eta(s)\,ds+\int_{t_{j-1}}^t \omega_{\alpha}(t-s)
\,\eta'(s)\,ds\quad {\rm for}~~ t \in I_j~{\rm with}~j\ge 2\,.
\end{split}\] For $t\in I_1$, from \eqref{eq:estimate eta} (with $n=1$) and the first regularity property in
\eqref{eq:countable-regularity v1} ($\sigma<1$), we observe \begin{equation}\label{eq: initial estimate}\begin{aligned} \|\eta'(t)\|&\le
\frac{1}{t_1}\int_0^{t_1}\|u'(s)\|ds+\|u'(t)\|\\
&\le \frac{C}{t_1}\int_0^{t_1} s^{\sigma-1}\,ds+Ct^{\sigma-1}\le C(t_1^{\sigma-1}+ t^{\sigma-1})\le Ct^{\sigma-1}\,.\end{aligned}\end{equation}
Hence, using the Cauchy-Schwarz inequality and integrating, we have
 \begin{equation}\label{eq: A1t}
\begin{aligned}
\int_{I_1} |\iprod{\A^\alpha_{1}(t),\eta'(t)}|\,dt &\le \int_{I_1} \|\eta'(t)\|\int_0^{t}
\omega_\alpha(t-s)\,\| \eta'(s)\|\,ds\, dt\\
&\le C\,\int_{I_1} t^{\sigma-1} \int_0^{t} \frac{(t-s)^{\alpha-1}}{\Gamma(\alpha)}\, s^{\sigma-1}\,ds\, dt\\
&= C\,\frac{\Gamma(\sigma)}{\Gamma(\sigma+\alpha)}\int_{I_1} t^{\sigma-1}\, t^{\sigma+\alpha-1}\, dt\le   C t_1^{\alpha+2\sigma-1}\,.
\end{aligned}\end{equation}
To estimate the term involving $\A^\alpha_2(t)$, we use \eqref{eq: initial estimate},   and the inequality: $(t_j-s)^\alpha-(t_{j-1}-s)^\alpha\le k_j^\alpha$ after integrating,
\begin{align*}
\int_{I_j} |\iprod{\A^\alpha_{2}(t),\eta'(t)}|\,dt &\le \int_{I_j} \|\eta'(t)\|\int_{I_1}
\omega_\alpha(t-s)\,\| \eta'(s)\|\,ds\, dt\\
&\le C\,\|\eta'\|_{I_j}\int_{I_1} \int_{I_j}  (t-s)^{\alpha-1}\, s^{\sigma-1}\,dt\, ds\\
 &\le     C\,\|\eta'\|_{I_j} k_j^\alpha\, t_1^{\sigma}
=     C\,k_j \|\eta'\|_{I_j}  k_j^{\alpha-1} t_1^{\sigma}\quad{\rm for}~~j\ge 2\,.
\end{align*}
Hence, summing over $j$, and using $k_j^{(\alpha-1)/2} \le  k_1^{(\alpha-1)/2} $ for $j\ge 1$ (from the mesh properties),  we get
 \begin{equation}\label{eq: A2t}
\begin{aligned}
\sum_{j=2}^n \int_{I_j} |\iprod{\A^\alpha_{2}(t),\eta'(t)}|\,dt &\le  C\, \sum_{j=2}^n ( k_j^{(\alpha+1)/2}\|\eta'\|_{I_j})t_1^{\sigma+(\alpha-1)/2}
\\
&\le C\,\left( \max_{j=2}^n ( k_j^{\alpha+1}\|\eta'\|_{I_j}^2) +  t_1^{\alpha+2\sigma-1}\right)k^{-1}\,.
\end{aligned}\end{equation}
It remains to estimate the term  $\int_{I_j} \iprod{\A^\alpha_{3,j}(t),\eta'(t)}\,dt$. Splitting it into two terms, changing the order of integrals and using; $ \int_{t_1}^{t_{j-1}} [\omega_{\alpha}(t_{j-1}-s)-\omega_{\alpha}(t_{j}-s)] \,ds \le \omega_{\alpha+1}(k_j)$, we notice that
\begin{align*}
\int_{I_j} \|\A^\alpha_{3,j}(t)\|\,dt &\le    \int_{I_j}\Big( \int_{t_1}^{t_{j-1}}
\omega_{\alpha-1}(t-s)\,\|\eta(s)\|
\,ds+ \int_{t_{j-1}}^t
\omega_{\alpha}(t-s)\,\|\eta'(s)\|
\,ds\Big) \,dt\\
&\le  \int_{t_1}^{t_{j-1}} [\omega_{\alpha}(t_{j-1}-s)-\omega_{\alpha}(t_{j}-s)]\,\|\eta(s)\| \,ds+\omega_{\alpha+2}(k_j)\,\|\eta'\|_{I_j}
\\
&\le   \omega_{\alpha+1}(k_j)\max_{i=2}^{j-1}\|\eta\|_{I_i}+\omega_{\alpha+2}(k_j)\,\|\eta'\|_{I_j} \,.
\end{align*}
 Therefore,  summing over $j$,  then, using the Cauchy-Schwarz inequality, the inequality $k_j^{\alpha-1}\le k_i^{\alpha-1}$ for $i\le j$, and the identity $\eta(t)=\int_{t_{i-1}}^t \eta'(s)\,ds$ for $t\in I_i$ (because $\eta(t_i)=0$ for $i=0,1,\cdots, N$),    we observe
 \begin{equation}\label{ A3t partial}\begin{aligned}  \int_{t_1}^{t_n}
|\iprod{\A^\alpha_{3,j}(t),\eta'(t)}|\,dt&\le  C\sum_{j=2}^n  \left(k_j^{\frac{\alpha+1}{2}}\|\eta'\|_{I_j}
k_j^{\frac{\alpha-1}{2}}\max_{i=2}^{j-1} \|\eta\|_{I_i}+ k_j^{\alpha+1}\|\eta'\|_{I_j}^2\right)
\\
&\le  C\sum_{j=2}^n  \left(k_j^{\alpha-1}
\max_{i=2}^{j-1} \|\eta\|_{I_i}^2+ k_j^{\alpha+1}\|\eta'\|_{I_j}^2\right)
\\&\le  C\,k^{-1} \max_{j=2}^n (k_j^{\alpha+1}\|\eta'\|_{I_j}^2)\,.
\end{aligned}
\end{equation}
Now, from the interpolation errors in  Theorem \ref{thm: estimate of eta}, the first regularity assumption in \eqref{eq:countable-regularity v1}, the time mesh property \eqref{eq:kn mesh}, and the graded exponent time mesh assumption, $\gamma \ge (2m+1+\alpha)/(2\sigma+\alpha-1),$  we get
\begin{equation}\label{eq: estimate of eta_n}
\begin{aligned}
 k_j^{\alpha+1}\|\eta'\|_{I_j}^2  &\le C\,
 k_j^{2m+\alpha}\int_{I_j} \|u^{(m+1)}(t)\|^2\,dt
\\
& \le C\,
 k_j^{2m+1+\alpha}t_j^{2(\sigma-m-1)}
\\&\le   C\,k^{2m+1+\alpha}  t_j^{2m+1+\alpha-(2m+1+\alpha)/\gamma} \, t_j^{2(\sigma-m-1)}
\\&=  C\,k^{2m+1+\alpha}   \, t_{j}^{2\sigma+\alpha-1-(2m+1+\alpha)/\gamma}
\le   C\,k^{2m+1+\alpha} \,.
\end{aligned}
\end{equation}
Finally, combining  \eqref{eq:
split}--\eqref{ A3t partial},  the above bound, and using the  inequality  $t_1^{2\sigma+\alpha-1}=k^{\gamma(2\sigma+\alpha-1)}$ $\le k^{m+\alpha+1}$,    yield the desired estimate. \end{proof}
\subsection{Estimate of $\int_0^{t_n} \iprod{{^c}{\rm D}^{\alpha} \Delta \eta,\Delta \eta}\,dt$}
Assume that $u$ satisfies the second regularity assumption in \eqref{eq:countable-regularity v1}. Then, for $1\le n\le N,$ we have
\begin{equation}\label{eq: split 2}
\int_0^{t_n}\iprod{{^c}{\rm D}^{\alpha} \Delta \eta,\Delta \eta}\,dt\le C\, k^{2m+1}~{\rm for}~ \gamma \ge (m+1)/(\delta-1)\,,\end{equation} where the constant $C$ depends on $d_2$, $\delta,$
$\alpha$, $\gamma$, $m$ and $T$\,.
\begin{proof}
Following the decomposition in \eqref{eq: split},
\begin{equation}\label{eq: Delta split}
\begin{aligned}
\int_0^{t_n}\iprod{{^c}{\rm D}^{\alpha} \Delta \eta,\Delta \eta}\,dt=\int_{I_1} \iprod{\Delta &\A^{1-\alpha}_{1},\Delta
\eta}\,dt
+\int_{t_1}^{t_n}\iprod{\Delta(\A^{1-\alpha}_2+\A^{1-\alpha}_{3,j}),\Delta \eta}\,dt\,. \end{aligned}\end{equation}
To bound the first term, we use
 $\Delta
\A^{1-\alpha}_1(t)=\frac{\partial}{\partial t} \int_0^{t} \omega_{1-\alpha}(t-s)\,\Delta \eta(s)\,ds$ for $t\in I_1$ (because $\eta(0)=0$) and then, integrating by parts and using
the interpolation properties of the projection operatzor $\Pi$, yield
\begin{align*}
\int_{I_1} \iprod{\Delta \A^{1-\alpha}_{1}(t),\Delta \eta(t)}\,dt
 &= \int_{I_1} \iprod{\Delta \eta'(t),\I^{1-\alpha}\Delta \eta(t)}\, dt
=\int_{I_1} \iprod{\Delta \eta'(t),\I^{2-\alpha}\Delta \eta'(t)}\, dt\,.
\end{align*}
 Following the steps in \eqref{eq: initial estimate} and using the second regularity property in \eqref{eq:countable-regularity v1}, we obtain
 \[\begin{aligned} \|\Delta \eta'(t)\|&\le
\frac{1}{t_1}\int_0^{t_1}\|\Delta u'(s)\|ds+\|\Delta u'(t)\|\\
&\le \frac{C}{t_1}\int_0^{t_1} s^{\delta-2}\,ds+Ct^{\delta-2}\le C(t_1^{\delta-2}+ t^{\delta-2})\le C\begin{cases} t^{\delta-2}~~~{\rm for}~~1<\delta<2\\
t_1^{\delta-2}~~~{\rm for}~~\delta\ge 2\,.\end{cases}\end{aligned}\]
  Hence, an application of the Cauchy-Schwarz inequality followed by direct integrations, yield
 \begin{equation}
\begin{aligned}\label{eq: Delta a1t}
\int_{I_1} |\iprod{\Delta& \A^{1-\alpha}_{1}(t),\Delta \eta(t)}|\,dt \le
 \int_{I_1} \|\Delta \eta'(t)\|\int_0^t
\omega_{2-\alpha}(t-s)\,\|\Delta  \eta'(s)\|\,ds\, dt\\
& \le C \int_{I_1} t^{\delta-2} \int_0^{t} (t-s)^{1-\alpha}\,s^{\delta-2}\,ds\, dt \le C t_1^{2\delta-\alpha-1}\le C t_1^{2(\delta-1)}
\end{aligned}\end{equation}
 for $1<\delta<2$, where in the last inequality, we used $t_1^{-\alpha}\le C\,t_1^{-1}$ since $0<\alpha<1$. A similar bound can be achieved when $\delta \ge 2.$

 In a similar manner (see the steps used to obtain \eqref{eq: A2t})
\begin{align*}
\int_{I_j} |\iprod{\Delta \A^{1-\alpha}_{2}(t),&\Delta \eta(t)}|\,dt
 \le \int_{I_j} \|\Delta \eta(t)\|\int_0^{t_1}
\omega_{1-\alpha}(t-s)\,\|\Delta  \eta'(s)\|\,ds\, dt\\
&  \le C\, \|\Delta \eta\|_{I_j}\int_{I_j} \int_0^{t_1}
\omega_{1-\alpha}(t-s)\,s^{\delta-2}\,ds\, dt\\
&  = C\, k_j \|\Delta \eta'\|_{I_j} \int_0^{t_1}
[\omega_{2-\alpha}(t_j-s)-\omega_{2-\alpha}(t_{j-1}-s)]\,s^{\delta-2}\,ds\\
&  = C\,k_j \omega_{2-\alpha}(k_j) \|\Delta \eta'\|_{I_j} \int_0^{t_1} \,s^{\delta-2}\,ds
\le  C\, k_j^{2-\alpha} \|\Delta \eta'\|_{I_j}t_1^{\delta-1}\end{align*}
where the identity $\Delta \eta(t)=\int_{t_{j-1}}^t \Delta \eta'(s)\,ds$ is also used.  Thus,
\begin{equation}\label{eq: Delta A2t n}
 \sum_{j=2}^n \int_{I_n} |\iprod{\Delta \A^{1-\alpha}_{2}(t),\Delta \eta(t)}|\,dt
   \le  C\,  \left(\max_{j=2}^n (k_j^2\|\Delta \eta'\|_{I_j}^2)+t_1^{2(\delta-1)}\right)\sum_{j=2}^n k_j^{1-\alpha} \,.
\end{equation}
It remains to estimate the term $\int_{t_1}^{t_n} |\iprod{\Delta \A^{1-\alpha}_{3,j},\Delta \eta}|\,dt$. Following the steps  in \eqref{ A3t partial},
\begin{align*}
\sum_{j=2}^n \int_{I_j} |\iprod{\Delta \A^{1-\alpha}_{3,j},\Delta \eta}|\,dt
 &\le  C\sum_{j=2}^n  \left(k_j^{1-\alpha}\|\Delta \eta\|_{I_j}
\max_{i=2}^{j-1} \|\Delta \eta\|_{I_i}+ k_j^{2-\alpha}\|\Delta \eta'\|_{I_j} \|\Delta \eta\|_{I_j}\right)
\\
&\le  C\sum_{j=2}^n k_J^{1-\alpha} \left(k_j\|\Delta \eta'\|_{I_j}
\max_{i=2}^{j-1} \|\Delta \eta\|_{I_i}+ k_j^2\|\Delta \eta'\|_{I_j}^2\right)
\\
&\le  C\sum_{j=2}^n  k_j^{1-\alpha} \left(\max_{i=2}^{j-1} \|\Delta \eta\|_{I_i}^2+ k_j^2\|\Delta \eta'\|_{I_j}^2\right)
\\
& \le  C
\max_{j=2}^{n} ( k_j^2\|\Delta \eta'\|_{I_j}^2)\sum_{j=2}^nk_j^{1-\alpha}\,.
\end{align*}
 Therefore, the desired estimate in
\eqref{eq: split 2} follows from \eqref{eq: Delta split}, \eqref{eq: Delta a1t}, \eqref{eq: Delta A2t n}, the above bound, and the following two inequalities: $\sum_{j=2}^nk_j^{1-\alpha}\le C\,k^{-\alpha}$, $t_1^{\delta -1}=k^{\gamma(\delta-1)}\le k^{m+1}$ (by the mesh assumption $\gamma \ge  (m+1)/(\delta-1)$) and
the estimate \begin{multline*}
k_j\|\Delta \eta'\|_{I_j}  \le C\,  k_j^{m+1}\, \|\Delta u^{(m+1)}\|_{I_j}
 \le C\,  k_j^{m+1}\, t_j^{\delta-m-2}\\
 \le C\,  k^{m+1}\,t_j^{m+1-(m+1)/\gamma} t_j^{\delta-m-2}  = C\, k^{m+1} t_j^{\delta-1-\frac{m+1}{\gamma}}\le C\,k^{m+1}\,.
\end{multline*}
Here, we used Theorem \ref{thm: estimate of eta}, the second regularity assumption in \eqref{eq:countable-regularity v1}, the mesh property \eqref{eq:kn mesh}, and the mesh assumption $\gamma \ge  (m+1)/(\delta-1)$.
 \end{proof}}
\subsection{The error estimates}
We are now ready to obtain our main error convergence  results for the DPG-FE solution. In the next theorem, we derive suboptimal algebraic
rates of convergence in time (short by order $1-\alpha/2$ from being optimal), and optimal convergence rates in the spacial discretization
provided the solution $u$ of \eqref{eq:fractional equation} is sufficiently regular. However, our numerical results illustrate an optimal
convergence rate in both time and space.


{\begin{theorem}\label{thm: main results} Let $u_0\in H^{r+1}(\Omega)$,  $f \in H^1((0,T);H^2(\Omega)),$ and let  the solution $u\in W^{1,1}((0,T);H^2(\Omega))$ of problem~\eqref{eq:fractional equation} satisfy the regularity properties
 in \eqref{eq:countable-regularity v1} (with $\sigma>(1-\alpha)/2$
 and  $\delta >1$). Moreover,  we assume that  $ u(t_1)\in H^{r+1}(\Omega)$
 and $u' \in L_2((t_1,T);H^{r+1}(\Omega))$. Let $U_h\in \W(S_h)$ be the DPG-FE approximation defined by
\eqref{eq: DPGFM}, and  assume that the time mesh graded factor $ \gamma \ge  \max
\Big\{\frac{m+1}{\delta-1},\frac{2m+1+\alpha}{2\sigma+\alpha-1}\Big\} $. Then,
\begin{multline*}
\max_{n=1}^n \|U_h-u\|_{I_n}^2  \le C k^{2m+\alpha}
+C\,h^{2r+2}\Big(\|u_0\|_{r+1}^2+\|u(t_1)\|_{r+1}^2+ \int_{t_1}^{t_n} \|u'(t)\|_{r+1}^2\,dt\Big),\end{multline*} where $C$ is a constant that depends
only on $d_1$, $d_2$, $\alpha$, $\sigma$, $\delta$, $\gamma$, $m$ and $T$ \,.
\end{theorem}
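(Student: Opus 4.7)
The plan is to use the decomposition $U_h - u = \zeta + \Pi\xi + \eta$ introduced in \eqref{eq: decompose Uh -u}, and to bound each of the three components in the $L_\infty(I_n;L_2(\Omega))$-norm by suitable combinations of the error in the time and in the spatial discretization, and then to combine them through the triangle inequality. The estimates for $\eta$ and $\Pi\xi$ are essentially interpolation/projection bounds, while $\zeta$ is the discrete error and is controlled by applying Theorem \ref{thm: zeta estimate} to route the bound through the three quantities that have just been estimated in the preceding subsections.

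For $\eta = \Pi u - u$, Theorem \ref{thm: estimate of eta} together with the regularity $\|u^{(m+1)}(t)\| \le c_{m+1} t^{\sigma - m - 1}$ and the mesh property \eqref{eq:kn mesh} yield $\|\eta\|_{I_n}^2 \le Ck^{2m+2}\,t_n^{2\sigma - (2m+2)/\gamma}$ for $n\ge2$; on $I_1$, $\Pi u$ is the linear interpolant and so $\|\eta\|_{I_1}\le C t_1^\sigma = Ck^{\gamma\sigma}$. The grading assumption $\gamma \ge (2m+1+\alpha)/(2\sigma+\alpha-1)$ makes both of these $O(k^{m+\alpha/2})$. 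For $\Pi\xi = \Pi(R_h u - u)$, the Ritz bound \eqref{eq:estimate ritz projector} gives $\|\xi(t)\| \le Ch^{r+1}\|u(t)\|_{r+1}$; combined with \eqref{(ii)} applied to $\xi$ on $(t_1,T)$ (where $\xi' = R_h u' - u'$ is square-integrable by the hypothesis $u' \in L_2((t_1,T);H^{r+1})$) and with the linear-interpolant form of $\Pi$ on $I_1$, this produces the desired
$$
\max_n \|\Pi\xi\|_{I_n}^2 \le Ch^{2r+2}\Big(\|u_0\|_{r+1}^2 + \|u(t_1)\|_{r+1}^2 + \int_{t_1}^{t_n}\|u'(t)\|_{r+1}^2\,dt\Big).
$$

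For $\zeta$, I would apply Theorem \ref{thm: zeta estimate} at time $t_n$. The first two right-hand integrals, $\int_0^{t_n}\iprod{^cD^{1-\alpha}\eta,\eta'}\,dt$ and $\int_0^{t_n}\iprod{\Delta\eta,\,^cD^\alpha\Delta\eta}\,dt$, are already bounded in the two preceding subsections by $Ck^{2m+\alpha}$ and $Ck^{2m+1}$, respectively. For the third term, $\omega_{\alpha+1}^2(t_n)\int_0^{t_n}\|\xi'\|^2\,dt$, I would split the integral at $t_1$: on $(t_1,t_n)$ use the Ritz estimate directly to get $Ch^{2r+2}\int_{t_1}^{t_n}\|u'\|_{r+1}^2\,dt$, and on $(0,t_1)$ exploit the fact that $\Pi$ is the linear interpolant, so that $(\Pi\xi)'|_{I_1}$ is constant and $\int_0^{t_1}\|(\Pi\xi)'\|^2\,dt = t_1^{-1}\|\xi(t_1)-\xi(0)\|^2$, which the Ritz estimate bounds by $Ct_1^{-1}h^{2r+2}\big(\|u_0\|_{r+1}^2+\|u(t_1)\|_{r+1}^2\big)$; the grading $\gamma \ge (m+1)/(\delta-1)$ together with the smoothing $\|u(t_1)-u_0\|_{r+1}\le Ct_1^{\min(\sigma,1)}$ (or $\sigma\ge 1/2$ type bounds that come out of the regularity hypotheses) will absorb the $t_1^{-1}$ factor against the $h^{2r+2}$ term. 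This delivers $\|\nabla\zeta(t_n)\|^2 \le Ck^{2m+\alpha} + Ch^{2r+2}(\ldots)$.

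Finally, to pass from $\|\nabla \zeta(t_n)\|$ to $\|\zeta\|_{I_n}$, I would invoke Poincar\'e's inequality to get $\|\zeta(t_n)\|\le C\|\nabla\zeta(t_n)\|$ at each nodal time, and then use the polynomial structure $\zeta|_{I_n}\in P_m(S_h)$: combining the nodal bounds at $t_{n-1}$ and $t_n$ with an inverse/Markov-type inequality on polynomials of degree $\le m$ and with the coercivity bound on $\int_0^{t_n}\iprod{^cD^{1-\alpha}\zeta,\zeta'}\,dt$ (the left-hand side of Theorem \ref{thm: zeta estimate}) yields the full $L_\infty$-in-time control. The main obstacle in the plan is the last step just described: Theorem \ref{thm: zeta estimate} delivers the estimate at nodes only, and the transfer to $L_\infty(I_n;L_2(\Omega))$ for the discrete error $\zeta$ has to be done carefully so as not to lose a factor in $k_n$, especially on the graded mesh where $k_n$ can vary widely. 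A secondary difficulty, noted above, is handling the $\|\xi'\|^2$ integral on $I_1$ in a way that respects the limited regularity of $u'$ near $t=0$ while still producing an $h^{2r+2}$ bound independent of $t_1$.
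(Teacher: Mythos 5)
Your overall strategy coincides with the paper's: the same decomposition $U_h-u=\zeta+\Pi\xi+\eta$, essentially the same interpolation/Ritz arguments for $\eta$ and $\Pi\xi$, and the same routing of $\zeta$ through Theorem \ref{thm: zeta estimate} and the two subsection bounds. The genuine gap is the final step for $\zeta$, which you yourself flag as the main obstacle. From Theorem \ref{thm: zeta estimate} you propose to keep only the nodal quantities $\|\nabla\zeta(t_j)\|$, pass to $\|\zeta(t_j)\|$ by Poincar\'e, and then recover $\|\zeta\|_{I_n}=\sup_{t\in I_n}\|\zeta(t)\|$ by combining the two endpoint values with a Markov/inverse inequality and ``the coercivity bound.'' For $m\ge 2$ a polynomial of degree $m$ on $I_n$ is not determined by its endpoint values, so interior information is indispensable; the only interior information at hand is the nonlocal energy $\int_0^{t_n}\iprod{{^c}{\rm D}^{1-\alpha}\zeta,\zeta'}\,dt$, and you do not explain how to localize it to $I_n$. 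Any localization through inverse estimates costs negative powers of $k_n$, which on the graded mesh (where $k_1\sim k^{\gamma}$ with $\gamma$ possibly large) destroys the rate. As written, this step does not close.

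The paper avoids nodes entirely. Since $\zeta(0)=0$, one has the identity $\zeta(t)=\int_0^t\zeta'(s)\,ds=\I^{1-\frac{\alpha}{2}}\bigl({^c}{\rm D}^{1-\frac{\alpha}{2}}\zeta\bigr)(t)$ for \emph{every} $t$, whence by Cauchy--Schwarz $\|\zeta(t)\|^2\le\int_0^t\omega^2_{1-\frac{\alpha}{2}}(s)\,ds\,\int_0^t\|{^c}{\rm D}^{1-\frac{\alpha}{2}}\zeta(s)\|^2\,ds$; the coercivity property (ii) of Lemma \ref{lemma:positivity-equal-0} then bounds the last integral by $c_\alpha^{-1}\int_0^{t_n}\iprod{\zeta',{^c}{\rm D}^{1-\alpha}\zeta}\,dt$, which is exactly the first term on the left-hand side of Theorem \ref{thm: zeta estimate}. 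This gives $\|\zeta\|_{I_n}^2\le C k^{2m+\alpha}+C h^{2r+2}(\cdots)$ uniformly in $t$, with no Poincar\'e, no inverse inequalities, and no loss in $k_n$. A secondary weak point in your plan is the term $\int_0^{t_1}\|\xi'\|^2\,dt$: your absorption of the resulting $t_1^{-1}$ factor relies on a smoothing bound $\|u(t_1)-u_0\|_{r+1}\le Ct_1^{\min(\sigma,1)}$ that is not among the theorem's hypotheses and, since $2\sigma-1$ may be negative, does not by itself neutralize $t_1^{-1}$; the correct route is to work with $(\Pi\xi)'|_{I_1}$ (which is all that actually enters the proof of Theorem \ref{thm: zeta estimate} on $I_1$) and bound it by the endpoint values of $\xi$ exactly as in Step 2.
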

\begin{proof} From the decomposition: $u-U_h=\zeta+\Pi \xi + \eta$ (given in \eqref{eq: decompose Uh -u}),
\begin{equation}\label{eq: decomposition norm}\|u-U_h\|_{I_n}
\le \|\zeta\|_{I_n} +\|\Pi \xi\|_{I_n} +\|\eta\|_{I_n}\quad{\rm for}~~~ 1\le n\le N\,.\end{equation}

{\bf Step 1:} Estimating $\|\zeta\|_{I_n}$. Since $\zeta(0)=0$, we notice that $\zeta(t)=\int_0^t\zeta'(s)\,ds=\I^{1-\frac{ \alpha}{2}} (^c{\rm
D}^{1-\frac{\alpha}{2}} \zeta)(t)\,. $ So, for any $t\in I_n$,  the use of the Cauchy-Schwarz inequality yields
\[
\|\zeta(t)\|^2 \leq \Big(\int_0^t\, \omega_{1-\frac{\alpha}{2}}(t-s)\, \|^c{\rm D}^{1-\frac{\alpha}{2}} \zeta(s)\|\,ds\Big)^2 \leq \int_0^t\,
\omega^2_{1-\frac{\alpha}{2}}(s)\,ds \int_0^t\,\|^c{\rm D}^{1-\frac{\alpha}{2}} \zeta(s)\|^{2}\,ds
\]
and hence, by property $(ii)$ in   lemma \ref{lemma:positivity-equal-0},
 Theorem  \ref{thm: zeta estimate}, and the achieved bounds in \eqref{eq: split} and \eqref{eq: split 2},  we find that
\begin{align*}
\| \zeta\|_{I_n}^2 &\le C\,k^{2m+\alpha}\quad{\rm for}~~~ 1\le n\le N\,.
\end{align*}

{\bf Step 2:} Estimating $\|\Pi \xi\|_{I_n}$.
For $n\ge 2,$ $\Pi \xi(t)=\int_{t_1}^{t} (\Pi \xi)'(s)\,ds + (\Pi \xi)(t_1)$ for $t \in I_n.$ Since, $(\Pi \xi)(t_1)=\xi(t_1)$, an application of the Cauchy-Schwarz inequality gives
\begin{align*}
\|\Pi \xi\|_{I_n}^2 &\le  \left(\int_{t_1}^{t_n}\|(\Pi \xi)'(t)\|\,dt+\|\xi(t_1)\|\right)^2 \le 4 \int_{t_1}^{t_n}\|
\Pi \xi'(t)\|^{2}dt+2\|\xi(t_1)\|^2\,.
\end{align*}
For $n=1$, $\Pi \xi$ is linear in the time variable t. So, $|\Pi \xi\|_{I_1}\le \max \{\|\Pi
\xi(0)\|,\|\Pi \xi(t_1)\|\}$\,. Thus,  by \eqref{(ii)} with $\xi$ in place of $u$ and the Ritz projection  approximation error \eqref{eq:estimate ritz projector},
\begin{align*}
\|\Pi \xi\|_{I_n}^2 &  \le 4 \int_{t_1}^{t_n}\|
 \xi'(t)\|^{2}dt+2\|\xi(t_1)\|^2+\|\xi(0)\|^2\\
&  \le C\,h^{2r+2}\Big(\int_{t_1}^{t_n} \|u'(t)\|_{r+1}^2\,dt+\|u(t_1)\|_{r+1}^2+\|u_0\|_{r+1}^2\Big)\quad{\rm for}~~1\le n\le N\,.
\end{align*}
{\bf Step 3:} Estimating $\| \eta\|_{I_n}.$ For $t\in I_n,$ $\eta(t)=\int_{t_{n-1}}^t\eta'(s)\,ds$. Thus,
\begin{align*}
\| \eta\|_{I_n} &\le  \begin{cases} \int_0^{t_1}\|\eta'(t)\|\,dt \le C\,t_1^{\sigma} \le C\,t_1^{\sigma+\frac{\alpha-1}{2}} \le C \,k^{m+\frac{\alpha+1}{2}},\\
k_n\,\| \eta'\|_{I_n} \le C\,k_n^{\frac{\alpha+1}{2}}\| \eta'\|_{I_n}\le C\,k^{m+\frac{\alpha}{2}} \quad{\rm for}~~~ 2\le n\le N\end{cases}
\end{align*}
where we used  \eqref{eq: initial estimate} and the mesh assumption $ \gamma \ge  \frac{2m+1+\alpha}{2\sigma+\alpha-1}$ in the first estimate and \eqref{eq: estimate of eta_n} in the second one.

 Therefore, the desired
 error estimate  follows from the decomposition \eqref{eq: decomposition norm} and the bounds in {\bf Step 1}--{\bf Step 3}. \end{proof}}

\section{Numerical results}\label{sec:implementation and numerics}
 \setcounter{equation}{0} \setcounter{theorem}{0}
 In this section, we demonstrate the validity of the derived error results when  $\Delta u=u_{xx}$,  $\Omega=(0,1)$  and $T=1$ in the time-fractional
  problem  \eqref{eq:fractional equation}.  To evaluate the errors, we
introduce the finer grid
\begin{equation}\label{eq: fine grid}
\G^{q}=\{\,t_{j-1}+ n k_j/q\,:\, 1\leq j \leq N,\ 0 \leq n\leq q\,\}
\end{equation}
(recall that, $N$ is the number of time mesh subintervals). Thus, for large values of~$q$, the error measure $
|||v|||_{q}:=\max_{t\in\G^{q}}\|v(t)\|$ approximate  the norm $\|v\|_{L_\infty((0,T);L_2(\Omega))}$. To compute the spatial $L_2$-norm, we apply
a composite Gauss quadrature rule with $(r+1)$-points on each interval of the finest spatial mesh where $r$ is the degree of the approximate
solution in the spatial variable.

\subsection{Example 1} We choose  $u_0$ and $f$  such that the exact solution is $ u(x,t) = t^{\alpha+1}{\sin}(\pi x).$ It can be seen that the regularity conditions in \eqref{eq:countable-regularity v1}  hold for $\sigma=\alpha+1$ and $\delta=\alpha+2$.

First, to test the accuracy of the DPG-FE scheme \eqref{eq: DPGFM} (with degree $m$ in the time variable and $r$ in the spatial variable) on the
non-uniformly time graded meshes in~\eqref{eq: tn standard} for various choices of~$\gamma\ge1$,
 $h$ (the spatial step-size) will be chosen such that the temporal errors are dominating. Thus, from Theorem \ref{thm: main results}, we expect to observe convergence of
order $O(k^{m+\alpha/2})$ for $ \gamma \ge \max \Big\{\frac{m+1}{\alpha+1}, \frac{2m+1+\alpha}{3\alpha+1}\Big\} $. However, the numerical results in
Table \ref{tab: ||U_h-u|| alpha=0.2 Example 1 new} illustrate more optimistic convergence rates.  We observe a uniform global error bounded by
 $Ck^{\min\{\gamma(\alpha+1),m+1\}}$ for $\gamma \ge 1$,  which is optimal for
$\gamma\ge (m+1)/(\alpha+1)$. So, the numerical results  also demonstrated that the grading mesh parameter $\gamma$ is relaxed. The results are
also displayed graphically in Figure~\ref{fig1: DPG error Example 1}, where we show the errors against the number of time subintervals $N$, in
the semi-logarithmic scale.
 \begin{figure}
 \begin{center}
 \scalebox{0.36}{\includegraphics{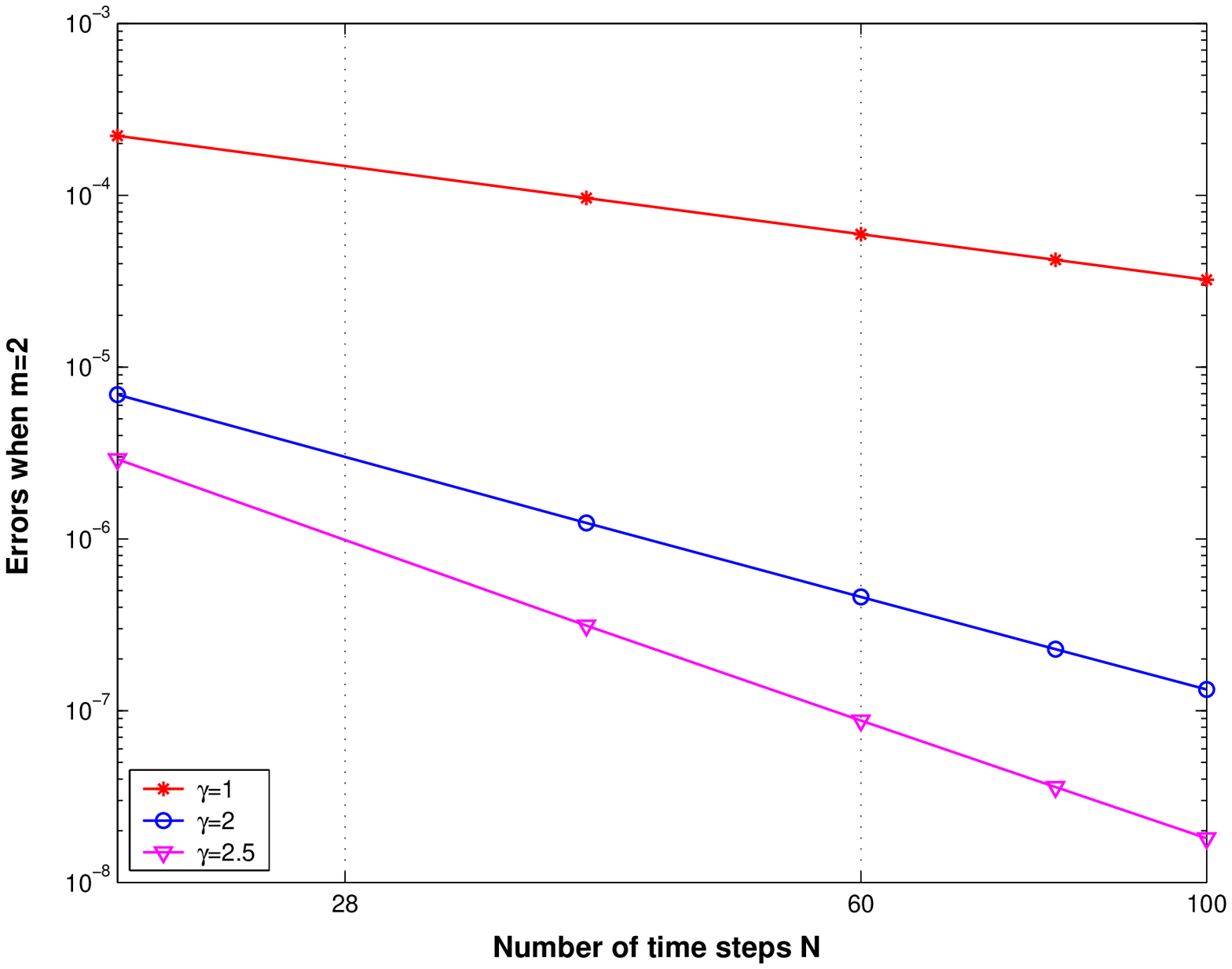} \includegraphics{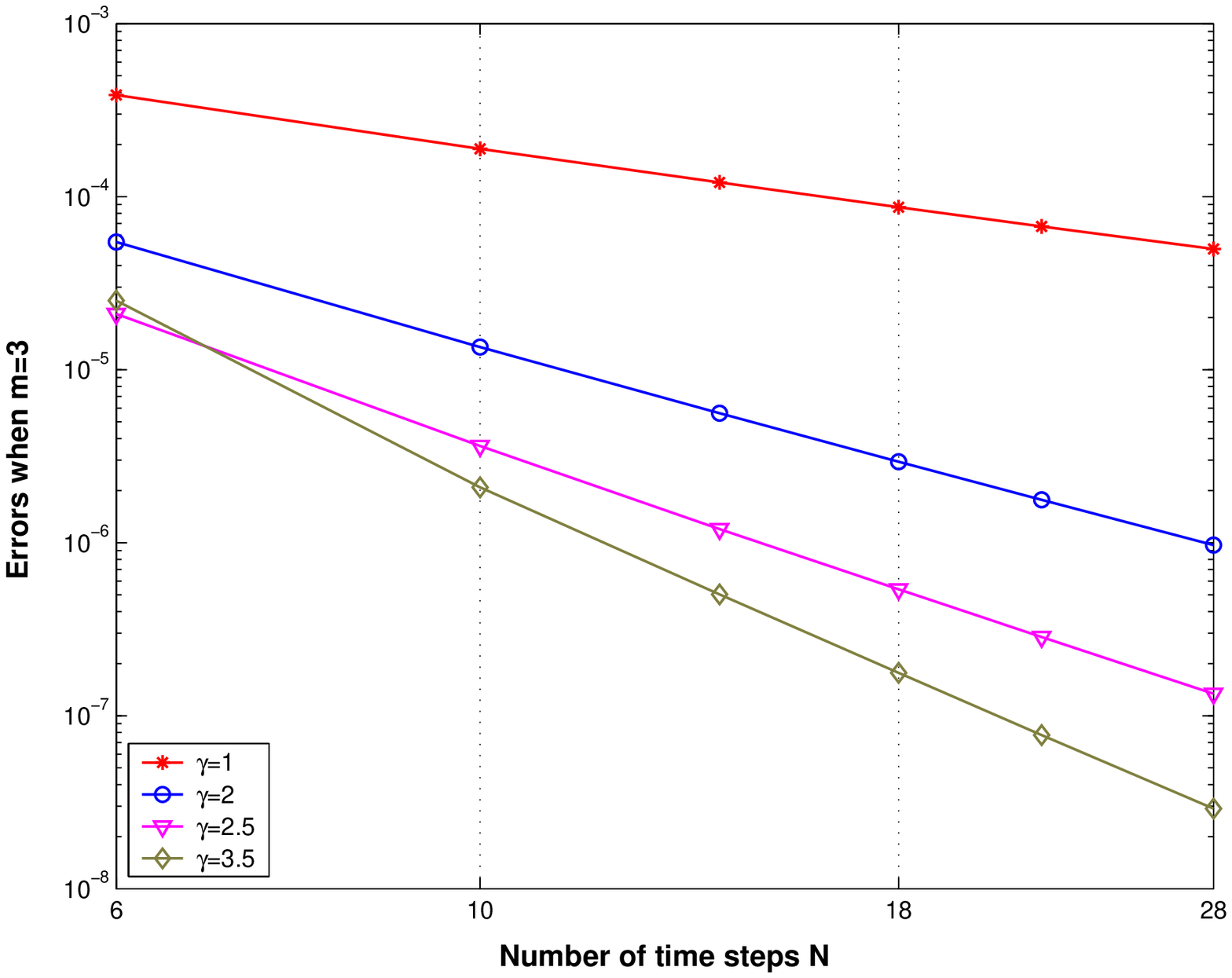}}
  \caption{The errors $|||U_h-u|||_{10}$ for Example 1  plotted against $N$ for different choices of~$\gamma$ and for $m=2,\,3$,
 with $\alpha=0.2$.}\label{fig1: DPG error Example 1}
 \end{center}
 \end{figure}
\begin{table}[htb]
\renewcommand{\arraystretch}{1.0}
\begin{center}
\begin{tabular}{|r|rr|rr|rr|rr|}
\hline \multicolumn{9}{|c|}{$m=1$}\\
 \hline $N$ &\multicolumn{2}{c|}{$\gamma=1$} &\multicolumn{2}{c|}{$\gamma=1.4$}&\multicolumn{2}{c|}{$\gamma=1.8$}&\multicolumn{2}{c|}{}
\\ \hline
   20& 9.83e-04&     & 2.49e-04&     & 2.67e-04&     & &     \\
   40& 4.45e-04& 1.14& 8.02e-05& 1.64& 6.66e-05& 1.99& &  \\
   80& 2.01e-04& 1.16& 2.55e-05& 1.65& 1.66e-05& 2.00& &    \\
  160& 8.91e-05& 1.17& 8.04e-06& 1.67& 4.01e-06& 2.05& &   \\
 \hline
 \hline \multicolumn{9}{|c|}{$m=2$}\\
 \hline $N$ &\multicolumn{2}{c|}{$\gamma=1$} &\multicolumn{2}{c|}{$\gamma=2$}&\multicolumn{2}{c|}{$\gamma=2.5$}&\multicolumn{2}{c|}{}
\\ \hline
  20& 2.22e-04&     & 6.92e-06&     & 2.91e-06&      & & \\
  40& 9.64e-05& 1.20& 1.24e-06& 2.48& 3.13e-07& 3.21 & & \\
  60& 5.93e-05& 1.20& 4.59e-07& 2.45& 8.77e-08& 3.14 & & \\
  80& 4.21e-05& 1.19& 2.28e-07& 2.43& 3.59e-08& 3.10 & & \\
 100& 3.22e-05& 1.19& 1.33e-07& 2.43& 1.81e-08& 3.08 & & \\
 \hline
 \hline \multicolumn{9}{|c|}{$m=3$}\\
 \hline $N$ &\multicolumn{2}{c|}{$\gamma=1$} &\multicolumn{2}{c|}{$\gamma=2$}&\multicolumn{2}{c|}{$\gamma=2.5$}&\multicolumn{2}{c|}{$\gamma=3.5$}
\\ \hline
 10& 1.89e-04& 1.40& 1.35e-05& 2.74& 3.62e-06& 3.44& 2.09e-06& 4.87\\
 14& 1.21e-04& 1.34& 5.60e-06& 2.62& 1.20e-06& 3.28& 5.02e-07& 4.23\\
 18& 8.69e-05& 1.31& 2.94e-06& 2.56& 5.38e-07& 3.20& 1.77e-07& 4.15\\
 22& 6.72e-05& 1.28& 1.77e-06& 2.53& 2.85e-07& 3.16& 7.74e-08& 4.12\\
 28& 4.99e-05& 1.23& 9.69e-07& 2.50& 1.34e-07& 3.13& 2.91e-08& 4.05\\
 40& 3.23e-05& 1.22& 4.01e-07& 2.47& 4.45e-08& 3.08& & \\
\hline
  \end{tabular} \vspace{0.05in} \caption {The errors $|||U_h-u|||_{10}$
 for different time mesh gradings  with $\alpha=0.2$. We observe convergence of order $k^{(\alpha+1)\gamma} (=k^{1.2\gamma})$ for $1\le
\gamma \le (m+1)/(\alpha+1)$ for $m=1,\,2,\,3$.} \label{tab: ||U_h-u|| alpha=0.2 Example 1 new}
\end{center}
\end{table}
In Figure~\ref{fig: error time against alpha}, we demonstrate the positive influence of time graded mesh power $\gamma$ on the error  that
remains valid for different values of $0.1\le \alpha\le 0.9$. The errors achieved as a function of $\alpha$ for different values of $\gamma$,
but for a fixed $N=60$ and a fixed $m=2.$

\begin{figure}[htb]
\begin{center}
\scalebox{0.38}{\includegraphics{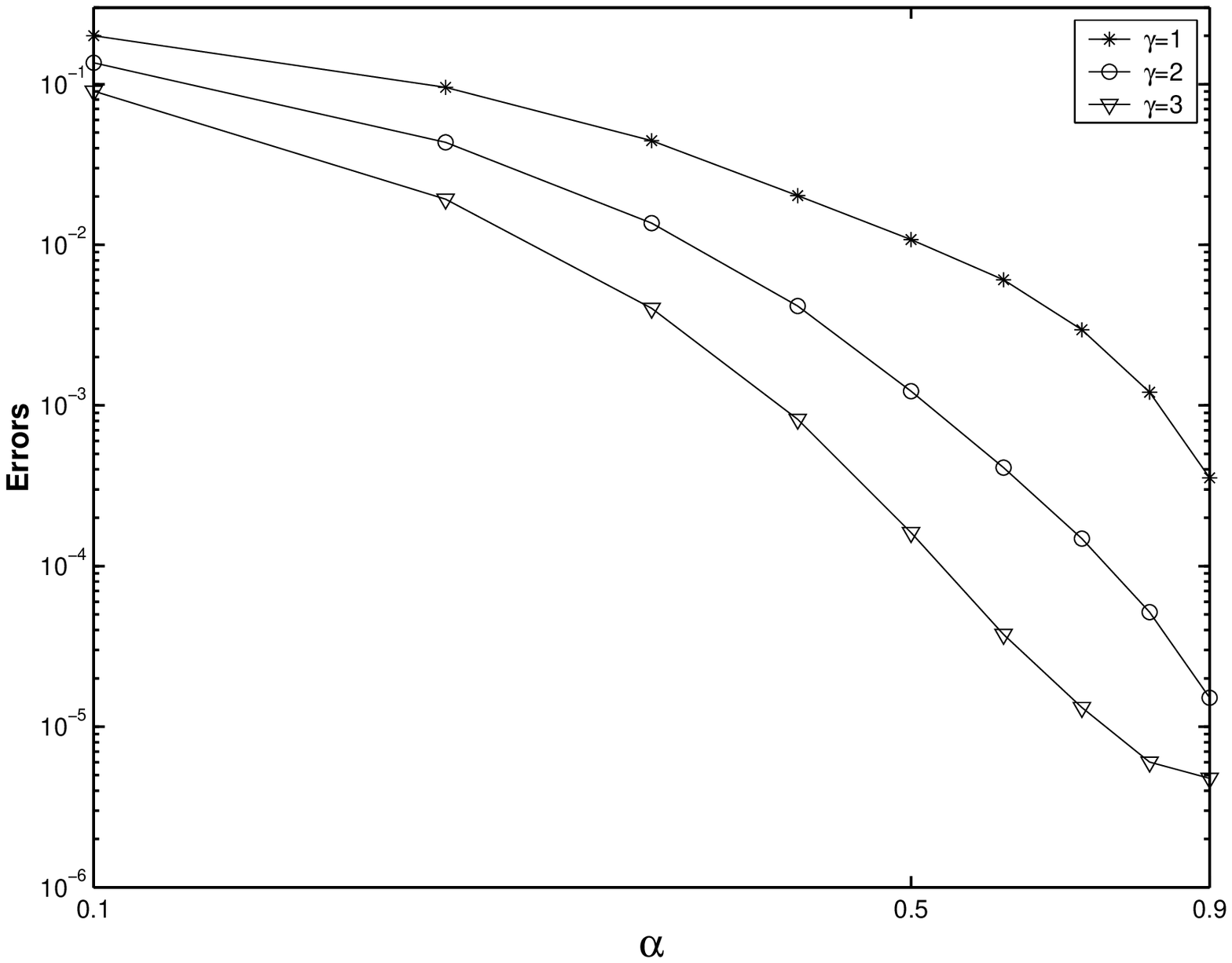}}
 \caption{The errors $|||U_h-u|||_{10}$ for Example 1 plotted against  $\alpha$
for different values of  $\gamma$ and fixed $N=60$ with $m=2$.}\label{fig: error time against alpha}
\end{center}
\end{figure}
Next, we test the performance of the spatial finite elements discretizaton (order degree $r$) of the scheme \eqref{eq: DPGFM}. A uniform spatial
mesh that consists of $N_x$ subintervals where each is of width  $h$ will be used. The  time step-size $k$ and the degree of the
 time-stepping DPG discretization are chosen such that
the spatial error is dominating. Hence, from Theorem \ref{thm: main results}, a convergence of order $O(h^{r+1})$ is expected. We illustrated
these results in Table \ref{tab: ||U_h-u|| spatial FEM Example 1}  for $r=1,\,2,\,3.$
\begin{table}[htb]
\renewcommand{\arraystretch}{1.0}
\begin{center}
\begin{tabular}{|r|rr|rr|rr|}
\hline $N_x$ &\multicolumn{2}{c|}{$r=1$} &\multicolumn{2}{c|}{$r=2$}&\multicolumn{2}{c|}{$r=3$}
\\ \hline
  10& 5.638e-03&      & 1.576e-04&        & 2.633e-06&      \\
  20& 1.426e-03& 1.983& 1.796e-05&  3.133& 1.568e-07& 4.069\\
  30& 6.367e-04& 1.988& 4.730e-06&  3.291& 2.998e-08& 4.081\\
  40& 3.584e-04& 1.998& 1.979e-06&  3.029& 9.172e-09& 4.117\\
  60& 1.592e-04& 2.002& 6.240e-07&  2.847& &\\
 \hline
  \end{tabular}
   \vspace{0.05in} \caption {The errors $|||U_h-u|||_{10}$ for Example 1 with $\alpha=0.5$. We observe a spatial
   convergence of order $h^{r+1}$  for $r=1,\,2,\,3.$ The time mesh will be chosen such that the spatial errors are dominating.}
 \label{tab: ||U_h-u|| spatial FEM Example 1}
\end{center}
\end{table}

\subsection{Example 2} (Less smooth) We choose $u_0$ and $f$ in problem \eqref{eq:fractional equation} such $
 u(x,t) = t^{1-\alpha}{\sin}(\pi x)$ is the exact solution.
It can be seen that the regularity conditions in \eqref{eq:countable-regularity v1}~ hold for $\sigma=1-\alpha$ and $\delta=2-\alpha$. Thus, it
is less smooth than Example 1 in the time variable.  As in the previous example, we demonstrate tabularly and graphically optimal convergence
rates  of the DPG-FE scheme \eqref{eq: DPGFM} in the time direction on the non-uniformly graded meshes in~\eqref{eq: tn standard}. To do so, we
take a relatively large number of subintervals in space and choose $r$ (the degree of the approximate finite element solution in the spatial
variable) appropriately so that the temporal errors are dominating. Thus, by Theorem \ref{thm: main results},   convergence of order
$O(k^{2m+\alpha})$ for  $ \gamma
\ge   \frac{2m+1+\alpha}{1-\alpha} $ is anticipated. However, and as in {\bf Example 1}, the numerical results
in Table \ref{tab: ||U_h-u|| alpha=0.3 Example 2} illustrate more optimistic convergence rates. We observed a uniform global error bounded by
 $Ck^{\min\{\gamma(1-\alpha),m+1\}}$ for $\gamma \ge 1$, which is optimal for
$\gamma\ge (m+1)/(1-\alpha)$ (relaxed) and not for  $ \gamma \ge   \frac{2m+\alpha+1}{1-\alpha} $ as the theory suggested.  For graphical
illustrations, see Figure \ref{fig1: DPG error Example 2}.

\begin{table}[htb]
\renewcommand{\arraystretch}{1}
\begin{center}
\begin{tabular}{|r|rr|rr|rr|rr|rr|}
 \hline
 $N$ &\multicolumn{2}{c|}{$\gamma=1$}
&\multicolumn{2}{c|}{$\gamma=2$}&\multicolumn{2}{c|}{$\gamma=3$}&\multicolumn{2}{c|}{$\gamma=3.6$} &\multicolumn{2}{c|}{$\gamma=4.2$}
\\
\hline \hline
& \multicolumn{10}{c|}{$m=1$}\\
\hline
    20& 9.3e-03&     & 1.5e-03&     & 1.5e-03&     & & & &\\
    40& 5.6e-03& 0.73& 4.5e-04& 1.7& 3.9e-04& 2.0& & & &\\
    80& 3.5e-03& 0.67& 1.7e-04& 1.4& 9.9e-05& 2.0& & & &\\
   160& 2.2e-03& 0.68& 6.5e-05& 1.4& 2.5e-05& 2.0& & & &\\
   320& 1.4e-03& 0.68& 2.5e-05& 1.4& 5.7e-06& 2.1& & & &\\
 \hline
\hline & \multicolumn{10}{c|}{$m=2$}\\
\hline
  10& 6.3e-03&     & 1.4e-03&     & 3.2e-04&     & 2.4e-04&     & 3.7e-04&     \\
  20& 3.8e-03& 0.71& 5.2e-04& 1.5& 6.7e-05& 2.3& 3.3e-05& 2.9& 3.1e-05& 3.6\\
  30& 2.9e-03& 0.69& 2.9e-04& 1.4& 2.8e-05& 2.2& 1.1e-05& 2.6& 7.5e-06& 3.5\\
  40& 2.4e-03& 0.69& 1.9e-04& 1.4& 1.5e-05& 2.2& 5.5e-06& 2.6& 2.9e-06& 3.3\\
  50& 2.0e-03& 0.69& 1.4e-04& 1.4& 9.2e-06& 2.1& 3.1e-06& 2.6& 1.5e-06& 3.1\\
  80& 1.5e-03& 0.69& 7.2e-05& 1.4& 3.4e-06& 2.1& 9.2e-07& 2.6& &\\
\hline
 \end{tabular}
   \vspace{0.05in} \caption {The errors $|||U_h-u|||_{10}$ for
  different time mesh gradings  with $m=1$ and $m=2$ (that is, piecewise linear and piecewise quadratic DPG time stepping
  solution), and  $\alpha=0.3$.  We observe convergence of order $k^{(1-\alpha)\gamma} (=k^{0.7\gamma})$ for $1\le
\gamma \le (m+1)/(1-\alpha)$.} \label{tab: ||U_h-u|| alpha=0.3 Example 2}
\end{center}
\end{table}
 \begin{figure}
 \begin{center}
 \scalebox{0.36}{\includegraphics{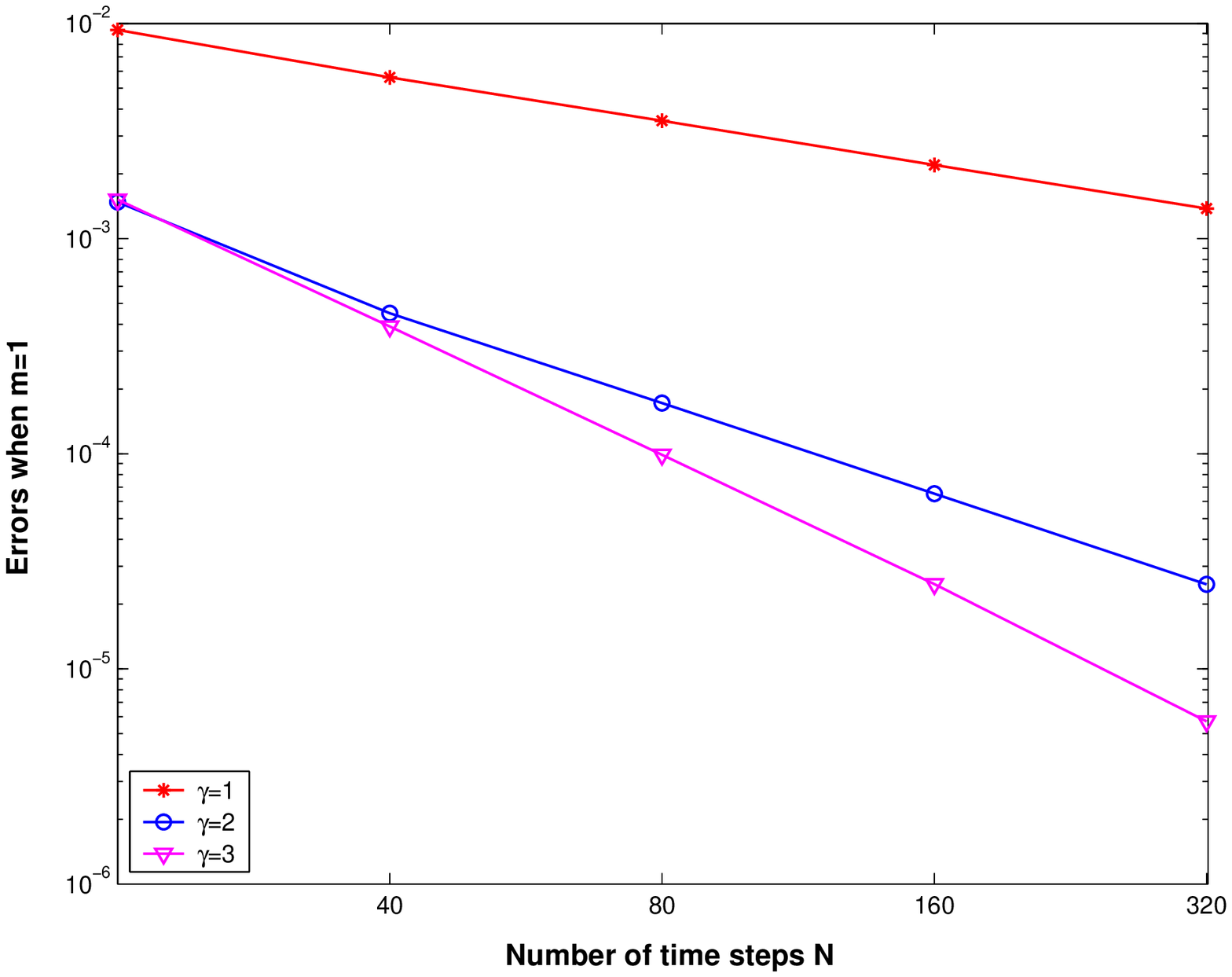} \includegraphics{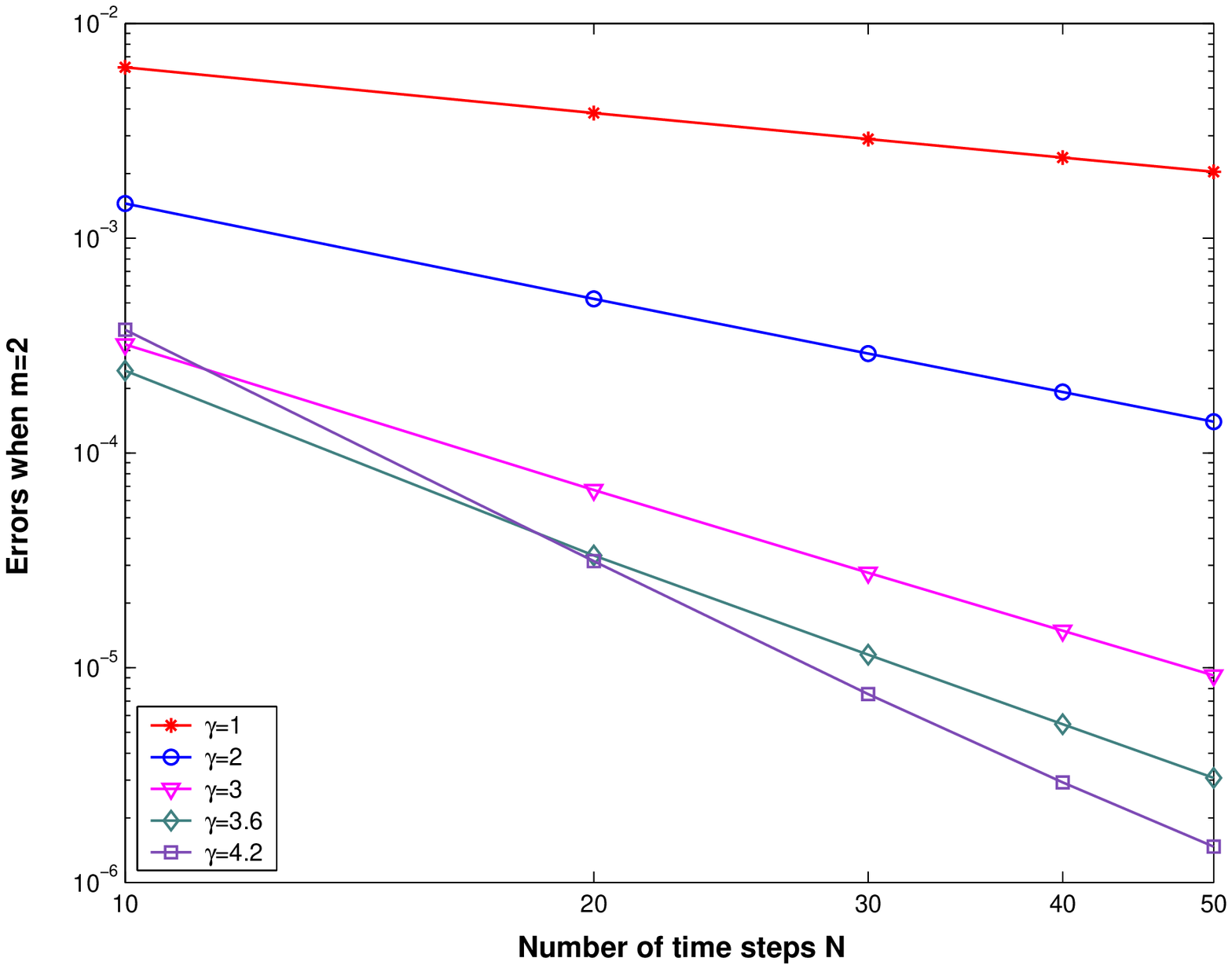}}
  \caption{The errors $|||U_h-u|||_{10}$ for Example 2 plotted against $N$ for different choices of~$\gamma$ and for $m=2,\,3$,
 with $\alpha=0.3$.}\label{fig1: DPG error Example 2}
 \end{center}
 \end{figure}

\end{document}